\providecommand{\algorithmname}{Algorithm}
\theoremstyle{plain}
\newtheorem{thm}{\protect\theoremname}
  \theoremstyle{definition}
  \newtheorem{example}[thm]{\protect\examplename}
  \theoremstyle{definition}
  \newtheorem{defn}[thm]{\protect\definitionname}
  \theoremstyle{remark}
  \newtheorem{rem}[thm]{\protect\remarkname}
  \theoremstyle{definition}
  \newtheorem{problem}[thm]{\protect\problemname}
  \theoremstyle{plain}
  \newtheorem{lem}[thm]{\protect\lemmaname}
\def\url@leostyle{%
  \@ifundefined{selectfont}{\def\UrlFont{\sf}}{\def\UrlFont{\footnotesize\rmfamily}}}
\newcommand{\norm}[1]{\left\Vert#1\right\Vert}
\newcommand{\optmin}{\mathrm{min.}}
\newcommand{\optmax}{\mathrm{max.}}
\newcommand{\optst}{\;\mathrm{s.t.}}
  \providecommand{\definitionname}{Definition}
  \providecommand{\examplename}{Example}
  \providecommand{\lemmaname}{Lemma}
  \providecommand{\problemname}{Problem}
  \providecommand{\remarkname}{Remark}
\providecommand{\theoremname}{Theorem}
\begin{document}
\title{Differentially Private Convex Optimization with Piecewise\\ Affine Objectives}
\author{Shuo Han\thanks{E-mail: hanshuo@seas.upenn.edu}}
\author{Ufuk Topcu} 
\author{George J. Pappas}
\affil{Department of Electrical and Systems Engineering, University of Pennsylvania, Philadelphia, PA 19104}
\maketitle
\begin{abstract}
Differential privacy is a recently proposed notion of privacy that
provides strong privacy guarantees without any assumptions on the
adversary. The paper studies the problem of computing a differentially
private solution to convex optimization problems whose objective function
is piecewise affine. Such problem is motivated by applications in
which the affine functions that define the objective function contain
sensitive user information. We propose several privacy preserving
mechanisms and provide analysis on the trade-offs between optimality
and the level of privacy for these mechanisms. Numerical experiments
are also presented to evaluate their performance in practice. 

\end{abstract}

\section{Introduction}

With the advance in real-time computing and sensor technology, a growing
number of user-based cyber-physical systems start to utilize user
data for more efficient operation. In power systems, for example,
the utility company now has the capability of collecting near real-time
power consumption data from individual households through advanced
metering infrastructures in order to improve the demand forecast accuracy
and facilitate the operation of power plants~\cite{ca_smart_meters_url}.
At the same time, however, individual customer is exposed to the risk
that the utility company or a potential eavesdropper can learn about
information that the customer did not intend to share, which may include
marketable information such as the type of appliances being used or
even sensitive information such as the customer's daily activities.
Concerns on such privacy issues have been raised~\cite{molina2010private}
and start to become one major hindrance to effective user participation~\cite{hoenkamp2011neglected}. 

Unfortunately, it has been long recognized that \emph{ad-hoc} solutions
(e.g., anonymization of user data) are inadequate to guarantee privacy
due to the presence of public side information. This fact has been
demonstrated through various instances such as identification of Netflix
subscribers in the anonymized Netflix prize dataset through linkage
with the Internet Movie Database (IMDb)~\cite{narayanan2007break}.
Providing rigorous solutions to preserving privacy has become an active
area of research. In the field of systems engineering, recent work
on privacy includes, among others, privacy-preserving filtering of
streaming data~\cite{leny2014differentially}, privacy in smart metering~\cite{sankar2013smart},
privacy in traffic monitoring~\cite{canepa2013framework}, privacy
in stochastic control~\cite{venkitasubramaniam2013privacy}, etc.

Recently, the notion of \emph{differential privacy} proposed by Dwork
and her collaborators has received attention due to its strong privacy
guarantees~\cite{dwork2006calibrating}. The original setting assumes
that the sensitive database is held by a trustworthy party (often
called \emph{curator} in related literature), and the curator needs
to answer external queries (about the sensitive database) that potentially
come from an adversary who is interested in learning information belonging
to some user. Informally, preserving differential privacy requires
that the curator must ensure that the results of the queries remain
approximately unchanged if data belonging to any single user in the
database are modified or removed. In other words, the adversary knows
little about any single user's information from the results of queries.
Interested readers can refer to recent survey papers on differential
privacy for more details on this topic~\cite{dwork2008differential}.

Aside from privacy, another important aspect to consider is the usefulness
of the results of queries. In the context of systems operation, user
data are often used for guiding decisions that optimize systems performance.
Specifically, the ``query'' now becomes the \emph{solution} to the
optimization problem, whereas ``user data'' correspond to \emph{parameters}
that appear in the objective function and/or constraints of the optimization
problem. It is conceivable that preserving user privacy will come
at the cost of optimality. Indeed, without any considerations on systems
performance, one could protect privacy by choosing to ignore user
data, which may lead to solutions that are far from being optimal. 

Several researchers have looked into the application of differential
privacy to optimization problems. For example, Gupta et al. have studied
differential privacy in combinatorial optimization problems and derived
information-theoretic bounds on the utility for a given privacy level~\cite{gupta2010differentially}.
Among all related efforts, one that receives increasingly more attention
is applying differential privacy to convex optimization problems.
Convex optimization problems have traditionally been extensively studied
due to the richness in related results in optimization theory and
their broad applications. In the case of unconstrained convex optimization,
which appears frequently in machine learning (e.g., regression problems),
techniques such as output perturbation and objective perturbation
have been proposed by, among others, Chaudhuri et al.~\cite{chaudhuri2011differentially}
and Kifer et al.~\cite{kifer2012private}. Huang et al. have studied
the setting of private distributed convex optimization, where the
cost function of each agent is considered private~\cite{huang2014differentially}.
Very recently, Hsu et al. have proposed mechanisms for solving linear
programs privately using a differentially private variant of the multiplicative
weights algorithm~\cite{hsu2014privately}.

Rather than focusing on general convex optimization problems or even
linear programs, the work in this paper studies the class of convex
optimization problems whose objective function is piecewise affine,
with the possibility of including linear inequality constraints. This
form of optimization problems arises in applications such as $\ell_{1}$/$\ell_{\infty}$-norm
optimization and resource allocation problems. On one hand, focusing
on this particular class of problems allows us to exploit special
structures that may lead to better algorithms. On the other hand,
such problems can be viewed as a special form of linear programs,
and it is expected that studies on this problem may lead to insights
into applying differential privacy to more general linear programs.

Our major result in this paper is the introduction and analysis of
several mechanisms that preserve differential privacy for convex optimization
problems of this kind. These mechanisms include generic mechanisms
such as the \emph{Laplace mechanism} and the \emph{exponential mechanism}.
We also propose a new mechanism named \emph{differentially private
subgradient method}, which obtains a differentially private solution
by iteratively solving the problem privately. In addition, we provide
theoretical analysis on the suboptimality of these mechanisms and
show the trade-offs between optimality and privacy. %

\section{Problem statement\label{sec:Differential-privacy}}

\subsection{Differential privacy}

Denote by~$\mathcal{D}$ the universe of all databases of interest.
The information that we would like to obtain from a database~$D\in\mathcal{D}$
is represented by a mapping called \emph{query}~$q\colon\mathcal{D}\to\mathcal{Q}$
for some target domain~$\mathcal{Q}$. When the database~$D$ contains
private user information, directly making~$q(D)$ available to the
public may cause users in the database to lose their privacy, and
addition processing (called a \emph{mechanism}) that depends on~$q$
is generally necessary in order to preserve privacy. 
\begin{example}
For a database containing the salaries of a group of people, we can
define~$D=\{x_{i}\}_{i=1}^{n}$, where~$x_{i}\in\mathbb{R}_{+}$
is the salary of user~$i$ (assuming no minimum denomination). Suppose
someone is interested in the average salary of people in the database.
Then the query can be written as~$q(D)=\sum_{i=1}^{n}x_{i}/n$ for
the target domain~$\mathcal{Q}=\mathbb{R}_{+}$.
\end{example}
The fundamental idea of differential privacy is to translate \emph{privacy}
of an individual user in the database into \emph{changes} in the database
caused by that user (hence the name \emph{differential}). With this
connection, preserving privacy becomes equivalent to hiding changes
in the database. Basic changes include addition, removal, and modification
of a single user's data in the database: addition/removal is often
used if privacy is the presence/participation of any single user in
the database (which is common in surveys of diseases); modification
is often used if privacy is in the user data record (if an adversary
cannot tell whether the data record of any single user is modified,
it is impossible for the adversary to obtain exact value of the data).
More generally, changes in database can be defined by a symmetric
binary relation on~$\mathcal{D}\times\mathcal{D}$ called \emph{adjacency}
relation, which is denoted by~$\mathrm{Adj}(\cdot,\cdot)$. It can
be verified that addition, removal, or modification of data belonging
to a single user defines a valid adjacency relation. We will use the
the notion of adjacent database hereafter.

In the framework of differential privacy, all mechanisms under consideration
are \emph{randomized}, i.e., for a given database, the output of such
a mechanism obeys a probability distribution. A differentially private
mechanism must ensure that its output distribution does not vary much
between two adjacent databases.
\begin{defn}
[Differential privacy~\cite{dwork2006calibrating}]A randomized mechanism~$M\colon\mathcal{D}\to\mathcal{Q}$
preserves $\epsilon$-differential privacy if for all~$\mathcal{R}\subseteq\mathcal{Q}$
and all pairs of adjacent databases~$D$ and~$D'$:
\[
\mathbb{P}(M(D)\in\mathcal{R})\leq e^{\epsilon}\mathbb{P}(M(D')\in\mathcal{R}).
\]

\end{defn}
The constant~$\epsilon>0$ indicates the level of privacy: smaller~$\epsilon$
implies higher level of privacy. The notion of differential privacy
promises that an adversary cannot tell from the output with high probability
whether data corresponding to a single user in the database have changed.
This essentially hides user information at the individual level, no
matter what side information an adversary may have. The necessity
of randomized mechanisms is evident from the definition, since the
output of any non-constant deterministic mechanism will normally change
with the input database.
\begin{rem}
One useful interpretation of differential privacy can be made in the
context of detection theory~\cite{wasserman2010statistical,geng2012optimal}.
Suppose privacy is defined as user participation in the database and
thus the adjacency relation is defined as addition/removal of a single
user to/from the database. Consider an adversary who tries to detect
whether a particular user~$i$ is in the database from the output
of an~$\epsilon$-differentially private mechanism~$M$ using the
following rule: report true if the output of~$M$ lies in some set~$\mathcal{R}^{*}\subseteq\mathcal{Q}$
and false otherwise. Let~$D$ be the database with user~$i$ and~$D'$
be the one without. We are interested in the probabilities of two
types of detection errors: false positive probability~$p_{\mathrm{FP}}=\mathbb{P}(M(D')\in\mathcal{R}^{*})$
(i.e., user~$i$ is not present, but the detection algorithm reports
true) and false negative probability~$p_{\mathrm{FN}}=\mathbb{P}(M(D)\notin\mathcal{R}^{*})=\mathbb{P}(M(D)\in\mathcal{Q}\backslash\mathcal{R}^{*})$,
both of which need to be small for achieving good detection. Since~$D$
and $D'$ are adjacent, we know from the definition of differential
privacy that
\begin{align*}
\mathbb{P}(M(D)\in\mathcal{R}^{*}) & \leq e^{\epsilon}\mathbb{P}(M(D')\in\mathcal{R}^{*}),\\
\mathbb{P}(M(D')\in\mathcal{Q}\backslash\mathcal{R}^{*}) & \leq e^{\epsilon}\mathbb{P}(M(D)\in\mathcal{Q}\backslash\mathcal{R}^{*}),
\end{align*}
which lead to
\begin{equation}
p_{\mathrm{FN}}+e^{\epsilon}p_{\mathrm{FP}}\geq1\quad\text{and}\quad e^{\epsilon}p_{\mathrm{FN}}+p_{\mathrm{FP}}\geq1.\label{eq:privacy_in_detection}
\end{equation}
The conditions in~(\ref{eq:privacy_in_detection}) imply that~$p_{\mathrm{FN}}$
and~$p_{\mathrm{FP}}$ cannot be both too small. Namely, these conditions
limit the detection capability of the adversary so that the privacy
of user~$i$ is protected. For example, if~$\epsilon=0.1$ and the
false negative probability $p_{\mathrm{FN}}=0.05$, then the false
positive probability~$p_{\mathrm{FP}}\geq\max\{1-e^{\epsilon}p_{\mathrm{FN}},e^{-\epsilon}(1-p_{\mathrm{FN}})\}\approx0.94$,
which is quite large.
\end{rem}

\subsection{Problem statement}

We consider minimization problems whose objective function~$f\colon\mathbb{R}^{d}\to\mathbb{R}$
is convex and piecewise affine:
\begin{equation}
f(x)=\max_{i=1,2,\dots,m}\{a_{i}^{T}x+b_{i}\}\label{eq:pwa}
\end{equation}
for some constants~$\{a_{i}\in\mathbb{R}^{d},b_{i}\in\mathbb{R}\}_{i=1}^{m}$.
For generality, we also add additional linear inequality constraints
that define a convex polytope~$\mathcal{P}$, so that the optimization
problem has the following form: 
\begin{alignat}{2}
 & \underset{x}{\optmin}\quad &  & f(x)\qquad\optst\quad x\in\mathcal{P}.\label{eq:pwa_problem}
\end{alignat}

In this paper, we restrict our attention to the case where user information
is in $\{b_{i}\}_{i=1}^{m}$, so that the database~$D=\{b_{i}\}_{i=1}^{m}$.
Any other information, including $\{a_{i}\}_{i=1}^{m}$ and~$\mathcal{P}$,
is considered as public and fixed. Define the adjacency relation between
two databases~$D=\{b_{i}\}_{i=1}^{m}$ and $D'=\{b_{i}'\}_{i=1}^{m}$
as follows: 
\begin{equation}
\mathrm{Adj}(D,D')\quad\text{if and only if}\quad\max_{i\in\{1,2,\dots,m\}}|b_{i}-b_{i}'|\leq b_{\mathrm{max}}.\label{eq:def_adj}
\end{equation}
Since~$D$ gives a complete description of problem~(\ref{eq:pwa_problem}),
we will often use~$D$ to represent both the database and the corresponding
optimization problem. With the definition of adjacency relation, we
are ready to give the formal problem statement.
\begin{problem}
\label{prob:private_pwa}For all problems in the form of~(\ref{eq:pwa_problem}),
find a mechanism~$M$ that outputs an approximate optimal solution
that preserves $\epsilon$-differential privacy under the adjacency
relation~(\ref{eq:def_adj}). Namely, for all $\mathcal{R}\subseteq\mathcal{P}$
and all adjacent databases~$D$ and $D'$, the mechanism~$M$ must
satisfy 
\[
\mathbb{P}(M(D)\in\mathcal{R})\leq e^{\epsilon}\mathbb{P}(M(D')\in\mathcal{R}).
\]

\end{problem}

\subsection{Convex problems with piecewise affine objectives}

In the following, we give several examples of convex minimization
problems whose objective is piecewise affine:
\begin{example}
[$\ell_\infty$-norm]The $\ell_{\infty}$-norm $f(x)=\norm{x}_{\infty}$
can be rewritten in the form of~(\ref{eq:pwa}) consisting of~$2d$
affine functions: 
\[
f(x)=\max_{i=1,2,\dots,d}|x_{i}|=\max\{x_{1},-x_{1},x_{2},-x_{2},\dots,x_{d},-x_{d}\}.
\]

\end{example}
\begin{example}
[$\ell_1$-norm]The $\ell_{1}$-norm $f(x)=\norm{x}_{1}$ can be rewritten
in the form of~(\ref{eq:pwa}) consisting of~$2^{d}$ affine functions:
\[
f(x)=\sum_{i=1}^{d}\max\{x_{i},-x_{i}\}=\max_{\{\alpha_{i}\in\{0,1\}\}_{i=1}^{d}}\sum_{i=1}^{d}(-1)^{\alpha_{i}}x_{i}.
\]

\end{example}
\begin{example}
[Resource allocation]Consider the following resource allocation problem,
which is one such example where \emph{private optimal solution} may
be desired. Suppose we need to purchase a certain kind of resource
and allocate it among~$n$ agents, and we need to decide the optimal
amount of resource to purchase. Agent~$i$, if being allocated $z_{i}$
amount of resource, can provide utility~$c_{i}z_{i}$, where~$c_{i}$
is its utility gain. This holds until the maximum desired resource
(denoted by~$\bar{z}_{i}$) for agent~$i$ is reached. 

Suppose the total amount of resource to allocate is given as $x\geq0$.
The maximum utility gain can be determined by the optimal value of
the following optimization problem
\begin{alignat}{2}
 & \underset{z}{\optmax}\quad &  & c^{T}z\qquad\optst\quad\mathbf{1}^{T}z\leq x,\qquad0\preceq z\preceq\bar{z},\label{eq:RA_primal}
\end{alignat}
whose optimal value is denoted as~$U(x)$. One can show that $U(x)$
is a concave and piecewise affine function by considering the dual
of~problem (\ref{eq:RA_primal}):
\begin{alignat}{2}
 & \underset{\lambda,\nu}{\optmin}\quad &  & \nu x+\lambda^{T}\bar{z}\label{eq:RA_dual}\\
 & \optst\quad &  & \nu\geq0,\qquad\lambda\succeq0,\qquad\lambda+\nu\mathbf{1}-c\succeq0.\nonumber 
\end{alignat}
Strong duality holds since the primal problem~(\ref{eq:RA_primal})
is always feasible ($z=0$ is a feasible solution), which allows us
to redefine $U(x)$ as the optimal value of problem~(\ref{eq:RA_primal}).
In addition, since the optimal value of any linear program can always
be attained at a vertex of the constraint polytope, we can rewrite
$U$ as the pointwise minimum of affine functions (hence~$U$ is
concave): 
\begin{equation}
U(x)=\min_{i=1,2,\dots,m}\{\nu_{i}x+\lambda_{i}^{T}\bar{z}\},\label{eq:pwa_RA}
\end{equation}
where $\{(\nu_{i},\lambda_{i})\}_{i=1}^{m}$ are the vertices of the
constraint polytope in problem~(\ref{eq:RA_dual}). If we are interested
in maximizing the net utility $U(x)-\mu x$ over $x$, where~$\mu$
is the price of the resource, the problem becomes equivalent to one
in the form of~(\ref{eq:pwa_problem}). \end{example}
\begin{rem}
In certain applications, the maximum desired resource~$\bar{z}_{i}$,
which is present in the affine functions in~(\ref{eq:pwa_RA}), may
be treated as private information by agent~$i$. As an example, consider
the scenario where each agent represents a consumer in a power network,
and the resource to be allocated is the available electricity. Then
the maximum desired resource~$\bar{z}_{i}$ can potentially reveal
activities of consumer~$i$, e.g., small~$\bar{z}_{i}$ may indicate
that consumer~$i$ is away from home.\end{rem}

\section{Useful tools in differential privacy}

This section reviews several useful tools in differential privacy
that will be used in later sections. Material in this section includes
the (vector) Laplace mechanism, the exponential mechanism, the post-processing
rule, and composition of private mechanisms. Readers who are familiar
with these topics may skip this section.

When the range of query~$\mathcal{Q}$ is~$\mathbb{R}$, one commonly
used differentially private mechanism is the \emph{Laplace mechanism}~\cite{dwork2006calibrating}.
In this paper, we use a multidimensional generalization of the Laplace
mechanism for queries that lie in $\mathbb{R}^{d}$. Suppose the sensitivity
of query~$q$, defined as 
\[
\Delta:=\max_{D,D'}\norm{q(D)-q(D')}_{\infty},
\]
is bounded. Then one way to achieve $\epsilon$-differential privacy
is to add i.i.d. Laplace noise $\mathrm{Lap}(d\Delta/\epsilon)$ to
each component of~$q$, which is guaranteed by the sequential composition
theorem (Theorem~\ref{thm:seq_composition}) listed at the end of
this section. However, a similar mechanism that requires less noise
can be adopted in this case by using the fact that the $\ell_{2}$-sensitivity
of the query~$\Delta_{2}$ (defined below) is also bounded:
\[
\Delta_{2}:=\max_{D,D'}\norm{q(D)-q(D')}_{2}\leq\sqrt{d}\Delta.
\]

\begin{thm}
\label{thm:vector_laplace}For a given query~$q$, let $\Delta_{2}=\max_{D,D'}\norm{q(D)-q(D')}_{2}$
be the $\ell_{2}$-sensitivity of~$q$. Then the mechanism~$M(D)=q(D)+w$,
where $w$ is a random vector whose probability distribution is proportional
to~$\exp(-\epsilon\norm{w}_{2}/\Delta_{2})$, preserves $\epsilon$-differential
privacy.
\end{thm}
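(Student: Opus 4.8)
The plan is to compare the probability density functions of $M(D)$ and $M(D')$ directly and bound their ratio pointwise. First I would note that the noise vector $w$ has a well-defined density $p_w(u) = Z^{-1}\exp(-\epsilon\norm{u}_2/\Delta_2)$, where the normalizing constant $Z = \int_{\reals^d}\exp(-\epsilon\norm{u}_2/\Delta_2)\,du$ is finite; this is checked by passing to polar coordinates, which reduces $Z$ to a constant multiple of $\int_0^\infty r^{d-1}e^{-\epsilon r/\Delta_2}\,dr$, a convergent Gamma-type integral. Since $M(D)$ is the translate $q(D)+w$, its density is $p_{M(D)}(x) = Z^{-1}\exp(-\epsilon\norm{x-q(D)}_2/\Delta_2)$, and likewise $p_{M(D')}(x) = Z^{-1}\exp(-\epsilon\norm{x-q(D')}_2/\Delta_2)$ with the \emph{same} constant $Z$.

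Next I would take the ratio of these densities at an arbitrary point $x\in\reals^d$. The constants $Z$ cancel, leaving
\[
\frac{p_{M(D)}(x)}{p_{M(D')}(x)} = \exp\!\left(\frac{\epsilon}{\Delta_2}\bigl(\norm{x-q(D')}_2 - \norm{x-q(D)}_2\bigr)\right).
\]
By the reverse triangle inequality, $\norm{x-q(D')}_2 - \norm{x-q(D)}_2 \le \norm{q(D)-q(D')}_2$, and by the definition of the $\ell_2$-sensitivity this is at most $\Delta_2$ whenever $D$ and $D'$ are adjacent. Hence the ratio is bounded above by $e^\epsilon$ for every $x$.

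Finally, for any measurable $\mathcal{R}\subseteq\reals^d$, integrating the pointwise bound yields
\[
\mathbb{P}(M(D)\in\mathcal{R}) = \int_{\mathcal{R}} p_{M(D)}(x)\,dx \le e^\epsilon\int_{\mathcal{R}} p_{M(D')}(x)\,dx = e^\epsilon\,\mathbb{P}(M(D')\in\mathcal{R}),
\]
which is exactly the defining inequality of $\epsilon$-differential privacy.

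I do not expect a serious obstacle here. The only points needing a little care are verifying that $Z$ is finite (so the noise distribution and the densities are legitimate) and observing that the \emph{same} $Z$ appears for $M(D)$ and $M(D')$ because the two densities differ only by a translation — this is precisely what makes the normalizing constant drop out of the ratio. Everything else is the reverse triangle inequality together with monotonicity of the integral.
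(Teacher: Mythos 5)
Your proof is correct: the density-ratio argument with the reverse triangle inequality, together with the check (via polar coordinates) that the normalizing constant is finite and identical for $M(D)$ and $M(D')$, is exactly the standard argument for this mechanism. The paper itself states Theorem~\ref{thm:vector_laplace} without proof, citing it as a known result, so there is nothing to compare against beyond noting that your argument is the canonical one and needs no changes.
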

We are not aware of the name of the mechanism described in Theorem~\ref{thm:vector_laplace}.
Although the additive perturbation~$w$ in Theorem~\ref{thm:vector_laplace}
does not follow the Laplace distribution (in fact, it follows the
Gamma distribution), we will still refer to this mechanism as the
\emph{vector} Laplace mechanism due to its close resemblance to the
(scalar) Laplace mechanism.

Another useful and quite general mechanism is the \emph{exponential
mechanism.} This mechanism requires a scoring function $u\colon\mathcal{Q}\times\mathcal{D}\to\mathbb{R}$.
For a given database~$D$, the scoring function~$u$ characterizes
the quality of any candidate query: if a query~$q$ is more desirable
than another query~$q'$, then we have $u(D,q)>u(D,q')$. The exponential
mechanism~$M_{E}(D;u)$ guarantees $\epsilon$-differential privacy
by randomly reporting~$q$ according to the probability density function
\[
\frac{\exp(\epsilon u(D,q)/2\Delta_{u})}{\int_{q'\in\mathcal{Q}}\exp(\epsilon u(D,q')/2\Delta_{u})\, dq'},
\]
where
\[
\Delta_{u}:=\max_{x}\max_{D,D'\colon\mathrm{Adj}(D,D')}|u(D,x)-u(D',x)|
\]
is the (global) sensitivity of the scoring function~$u$. 
\begin{thm}
[McSherry and Talwar~\cite{mcsherry2007mechanism}]\label{thm:exp_mech}The
exponential mechanism is $\epsilon$-differentially private.
\end{thm}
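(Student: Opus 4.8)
The plan is to work directly from the definition of $\epsilon$-differential privacy applied to the probability density of $M_E(D;u)$, reducing the claim to two elementary one-sided bounds obtained from the sensitivity $\Delta_u$. Fix a pair of adjacent databases $D,D'$ and a (measurable) subset $\mathcal{R}\subseteq\mathcal{Q}$, and write
\[
Z(D)=\int_{q'\in\mathcal{Q}}\exp\!\left(\frac{\epsilon u(D,q')}{2\Delta_u}\right)dq'
\]
for the normalizing constant, so that $\mathbb{P}(M_E(D;u)\in\mathcal{R})=Z(D)^{-1}\int_{\mathcal{R}}\exp(\epsilon u(D,q)/2\Delta_u)\,dq$, and likewise for $D'$. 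The goal is to show the ratio of these two probabilities is at most $e^{\epsilon}$, which factors naturally as the product of a bound on the ratio of the numerator integrals over $\mathcal{R}$ and a bound on $Z(D')/Z(D)$.

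For the numerator, I would use that $\mathrm{Adj}(D,D')$ implies $|u(D,q)-u(D',q)|\le\Delta_u$ for every $q$, hence the pointwise inequality $\exp(\epsilon u(D,q)/2\Delta_u)\le e^{\epsilon/2}\exp(\epsilon u(D',q)/2\Delta_u)$; integrating over $\mathcal{R}$ gives $\int_{\mathcal{R}}\exp(\epsilon u(D,q)/2\Delta_u)\,dq\le e^{\epsilon/2}\int_{\mathcal{R}}\exp(\epsilon u(D',q)/2\Delta_u)\,dq$. Applying the same pointwise inequality in the opposite direction and integrating over all of $\mathcal{Q}$ yields $Z(D)\ge e^{-\epsilon/2}Z(D')$, i.e.\ $Z(D')/Z(D)\le e^{\epsilon/2}$. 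Multiplying the two bounds produces
\[
\mathbb{P}(M_E(D;u)\in\mathcal{R})\le e^{\epsilon/2}\cdot e^{\epsilon/2}\cdot\mathbb{P}(M_E(D';u)\in\mathcal{R})=e^{\epsilon}\,\mathbb{P}(M_E(D';u)\in\mathcal{R}),
\]
which is exactly the required inequality. This also makes transparent why the mechanism carries the factor $2$ in the exponent: it splits the allowed privacy loss $\epsilon$ evenly between the numerator and the normalization.

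The argument is essentially routine, so there is no real obstacle; the only points deserving care are (i) the implicit standing assumption that $Z(D)$ is finite, so that the density is well defined (this holds, e.g., whenever $\mathcal{Q}$ is bounded or $u(D,\cdot)$ decays sufficiently fast, and should be stated as a hypothesis), and (ii) keeping the directions of the inequalities straight, since the numerator must be bounded \emph{above} while the normalization must be bounded \emph{below}, each contributing a factor $e^{\epsilon/2}$. I would also remark that when $\mathcal{Q}$ is discrete the same proof goes through verbatim with sums replacing integrals, recovering the original statement of McSherry and Talwar.
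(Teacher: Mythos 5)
Your proof is correct and is exactly the standard McSherry--Talwar argument: the paper itself states this theorem as a citation to~\cite{mcsherry2007mechanism} without reproducing a proof, and your two-sided use of the sensitivity bound (a factor $e^{\epsilon/2}$ from the numerator and another from the normalizing constant) is precisely how the cited result is established. Your caveat about finiteness of the normalizing integral is also consistent with the paper, which addresses that point separately in the remark following its instantiation of the mechanism.
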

When the range~$\mathcal{Q}$ is finite, i.e., $|\mathcal{Q}|<\infty$,
the exponential mechanism has the following probabilistic guarantee
on the suboptimality with respect to the scoring function.
\begin{thm}
[McSherry and Talwar~\cite{mcsherry2007mechanism}]\label{thm:exp_subopt}Consider
the exponential mechanism~$M_{E}(D;u)$ acting on a database D under
a scoring function~$u$. If~$\mathcal{Q}$ is finite, i.e., $|\mathcal{Q}|<\infty$,
then~$M_{E}$ satisfies
\[
\mathbb{P}\left[u_{\mathrm{opt}}-u(D,M_{E}(D;u))\geq\frac{2\Delta_{u}}{\epsilon}(\log|\mathcal{Q}|+t)\right]\leq e^{-t},
\]
where~$u_{\mathrm{opt}}=\max_{q\in\mathcal{Q}}u(D,q)$.
\end{thm}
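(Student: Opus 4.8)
The plan is to exploit the explicit density of the exponential mechanism together with a union-bound argument over the finite range~$\mathcal{Q}$. First I would abbreviate the inverse temperature by $\beta := \epsilon/(2\Delta_{u})$, so that $M_{E}(D;u)$ reports $q$ with probability $p(q) = \exp(\beta\, u(D,q))/Z$, where $Z := \sum_{q'\in\mathcal{Q}}\exp(\beta\, u(D,q'))$ is a finite normalizing sum (finite precisely because $|\mathcal{Q}|<\infty$). Let $q^{*}\in\mathcal{Q}$ be a maximizer of $u(D,\cdot)$, which exists by finiteness of $\mathcal{Q}$; thus $u(D,q^{*})=u_{\mathrm{opt}}$.

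Second, fix a threshold $s>0$ and consider the ``bad'' set $\mathcal{B}_{s} := \{q\in\mathcal{Q}\colon u(D,q)\le u_{\mathrm{opt}}-s\}$. I would bound $\mathbb{P}(M_{E}(D;u)\in\mathcal{B}_{s}) = \big(\sum_{q\in\mathcal{B}_{s}}\exp(\beta\, u(D,q))\big)/Z$ by estimating numerator and denominator separately: the numerator has at most $|\mathcal{Q}|$ terms, each at most $\exp(\beta(u_{\mathrm{opt}}-s))$, hence is at most $|\mathcal{Q}|\exp(\beta(u_{\mathrm{opt}}-s))$; the denominator $Z$ is at least the single term $\exp(\beta\, u(D,q^{*}))=\exp(\beta\, u_{\mathrm{opt}})$. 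Dividing the two estimates gives $\mathbb{P}(M_{E}(D;u)\in\mathcal{B}_{s})\le |\mathcal{Q}|\exp(-\beta s)$.

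Finally, I would set $s = \tfrac{2\Delta_{u}}{\epsilon}(\log|\mathcal{Q}|+t)$, which is exactly the value making $\beta s = \log|\mathcal{Q}|+t$, so that $|\mathcal{Q}|\exp(-\beta s) = |\mathcal{Q}|\,e^{-\log|\mathcal{Q}|-t} = e^{-t}$. Since the event $M_{E}(D;u)\in\mathcal{B}_{s}$ is precisely $u_{\mathrm{opt}}-u(D,M_{E}(D;u))\ge s$, this yields the claimed tail bound.

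I do not anticipate a real obstacle; the only points requiring care are that the maximizer $q^{*}$ is attained (which is where finiteness of $\mathcal{Q}$ enters, along with $Z$ being a sum rather than an integral) and keeping the direction of the inequalities straight when passing from the density ratio to the tail probability. If $\mathcal{Q}$ were infinite the denominator bound would survive with a supremum in place of the maximum, but the factor $|\mathcal{Q}|$ in the numerator would have to be replaced by a covering/volume quantity, which is why the statement is restricted to the finite case.
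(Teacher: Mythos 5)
Your proof is correct: the numerator/denominator estimate $\mathbb{P}(M_{E}(D;u)\in\mathcal{B}_{s})\le|\mathcal{Q}|e^{-\beta s}$ with $Z\ge\exp(\beta u_{\mathrm{opt}})$, followed by the choice $s=\tfrac{2\Delta_{u}}{\epsilon}(\log|\mathcal{Q}|+t)$, is exactly the standard McSherry--Talwar utility argument. Note that the paper itself does not prove this statement at all --- it imports it as a known result from~\cite{mcsherry2007mechanism} --- so there is no in-paper proof to compare against; your argument is the canonical one. It is worth observing that your reasoning is the discrete counterpart of the proof the paper does give for its continuous-range suboptimality bound (Theorem~\ref{thm:exp_pwa_subopt}): there the cardinality $|\mathcal{Q}|$ is replaced by the volume $\int_{x\colon f(x,D)\geq f_{\mathrm{opt}}+\gamma}dx$ and the crude lower bound $Z\ge\exp(\beta u_{\mathrm{opt}})$ is replaced by keeping the full normalizing integral inside the constant $C(\gamma,\epsilon)$, which is precisely the covering/volume modification you anticipate in your closing remark about the infinite case.
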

It is also possible to obtain the expected suboptimality using the
following lemma.
\begin{lem}
\label{lem:mean_with_exp_bound}Suppose a random variable~$X$ satisfies:
(1)$X\geq0$ and (2) $\mathbb{P}(X\geq t)\leq e^{-\alpha t}$ for
some $\alpha>0$. Then it holds that $\mathbb{E}[X]\leq1/\alpha$.\end{lem}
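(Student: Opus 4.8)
The plan is to prove the bound $\ex[X] \le 1/\alpha$ using the standard layer-cake (tail integral) formula for the expectation of a nonnegative random variable. Since $X \ge 0$, we have the identity $\ex[X] = \int_0^\infty \mathbb{P}(X \ge t)\, dt$, which holds for any nonnegative random variable (it follows from Tonelli's theorem applied to $\int_0^\infty \mathbb{1}[X \ge t]\, dt = X$, or one can take it as a known fact). This is the key structural step that converts the pointwise tail bound into a bound on the mean.

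Once that identity is in place, the rest is a direct estimate: substitute the hypothesis $\mathbb{P}(X \ge t) \le e^{-\alpha t}$ into the integrand to get
\[
\ex[X] = \int_0^\infty \mathbb{P}(X \ge t)\, dt \le \int_0^\infty e^{-\alpha t}\, dt = \frac{1}{\alpha}.
\]
That completes the argument. One small point of care is that the tail-integral identity in the form $\ex[X]=\int_0^\infty \mathbb{P}(X\ge t)\,dt$ requires $X \ge 0$, which is exactly assumption (1); without it one would need $\int_{-\infty}^0 \mathbb{P}(X \le t)\,dt$ as a correction term. Another minor point is that $\ex[X]$ could a priori be $+\infty$, but the computation shows the integral is finite, so this is not an issue — and in fact establishing finiteness of the mean is part of what the lemma delivers.

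I do not anticipate a genuine obstacle here: this is a routine application of a standard tool, and the only thing to be careful about is citing or justifying the tail-integral formula and noting where nonnegativity is used. If one wanted to avoid invoking the tail-integral identity as a black box, an alternative is to write $\ex[X] = \int_0^\infty \mathbb{P}(X > t)\,dt$ by Fubini/Tonelli on the product space, integrating the indicator $\mathbb{1}[t < X]$ over $t \in (0,\infty)$ against the law of $X$; the interchange of integrals is justified by nonnegativity of the integrand. Either way the proof is two or three lines.
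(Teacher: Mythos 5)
Your proposal is correct and follows essentially the same route as the paper's own proof: the paper also writes $X=\int_{0}^{\infty}I(X\geq t)\,dt$, interchanges expectation and integral to get $\mathbb{E}[X]=\int_{0}^{\infty}\mathbb{P}(X\geq t)\,dt$, and then bounds the tail by $e^{-\alpha t}$ to obtain $1/\alpha$. No gaps; your remarks about nonnegativity and Tonelli are exactly the justifications implicit in the paper's argument.
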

\begin{proof}
Use the fact that~$X\geq0$ to write~$X=\int_{0}^{\infty}I(X\geq t)\, dt.$
Then
\[
\mathbb{E}[X]=\mathbb{E}\left[\int_{0}^{\infty}I(X\geq t)\, dt\right]=\int_{0}^{\infty}\mathbb{E}[I(X\geq t)]\, dt=\int_{0}^{\infty}\mathbb{P}(X\geq t)\, dt\leq\int_{0}^{\infty}e^{-\alpha t}\, dt=1/\alpha.
\]

\end{proof}
Combine Theorem~\ref{thm:exp_subopt} and Lemma~\ref{lem:mean_with_exp_bound}
to obtain the expected suboptimality.
\begin{thm}
\label{thm:exp_mean_subopt}Under the same assumptions in Theorem~\ref{thm:exp_subopt},
the exponential mechanism~$M_{E}(D;u)$ satisfies
\[
\mathbb{E}\left[u_{\mathrm{opt}}-u(D,M_{E}(D;u))\right]\leq2\Delta_{u}(1+\log|\mathcal{Q}|)/\epsilon.
\]

\end{thm}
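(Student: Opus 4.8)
The plan is to treat the suboptimality as a nonnegative random variable, peel off the deterministic offset $(2\Delta_u/\epsilon)\log|\mathcal{Q}|$ coming from Theorem~\ref{thm:exp_subopt}, and apply Lemma~\ref{lem:mean_with_exp_bound} to what remains. Concretely, set $X := u_{\mathrm{opt}} - u(D,M_E(D;u))$. Since $u_{\mathrm{opt}} = \max_{q\in\mathcal{Q}} u(D,q)$, we have $X \ge 0$, so condition~(1) of Lemma~\ref{lem:mean_with_exp_bound} holds for $X$; however, the tail bound from Theorem~\ref{thm:exp_subopt} is not yet in the form $\mathbb{P}(\cdot \ge t) \le e^{-\alpha t}$ because of the additive $\log|\mathcal{Q}|$ term inside the probability.

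First I would introduce the constant $c := (2\Delta_u/\epsilon)\log|\mathcal{Q}|$ and the clipped variable $Y := \max\{X - c,\,0\} \ge 0$. For any $t \ge 0$, $\mathbb{P}(Y \ge t) = \mathbb{P}(X \ge c + t)$, and writing $c + t = (2\Delta_u/\epsilon)\bigl(\log|\mathcal{Q}| + \tfrac{\epsilon t}{2\Delta_u}\bigr)$ lets me invoke Theorem~\ref{thm:exp_subopt} with its parameter equal to $\tfrac{\epsilon t}{2\Delta_u} \ge 0$, yielding $\mathbb{P}(Y \ge t) \le e^{-\epsilon t/(2\Delta_u)}$. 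Thus $Y$ satisfies both hypotheses of Lemma~\ref{lem:mean_with_exp_bound} with $\alpha = \epsilon/(2\Delta_u)$, so $\mathbb{E}[Y] \le 2\Delta_u/\epsilon$.

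Finally, since $X \le Y + c$ pointwise by construction, I would conclude $\mathbb{E}[X] \le \mathbb{E}[Y] + c \le 2\Delta_u/\epsilon + (2\Delta_u/\epsilon)\log|\mathcal{Q}| = 2\Delta_u(1+\log|\mathcal{Q}|)/\epsilon$, which is exactly the claimed bound. (An equivalent route is to bypass the clipping and integrate the tail directly via $\mathbb{E}[X] = \int_0^\infty \mathbb{P}(X \ge s)\,ds$, splitting the integral at $s = c$: the part over $[0,c]$ contributes at most $c$ since probabilities are bounded by $1$, and the change of variables $s = c + \tfrac{2\Delta_u}{\epsilon}t$ turns the part over $[c,\infty)$ into $\tfrac{2\Delta_u}{\epsilon}\int_0^\infty e^{-t}\,dt = 2\Delta_u/\epsilon$.)

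There is no real obstacle here; the only point requiring a little care is the reparametrization that aligns the parameter $t$ in Lemma~\ref{lem:mean_with_exp_bound} with the parameter appearing in Theorem~\ref{thm:exp_subopt}, and the observation that clipping at $c$ (or equivalently splitting the integral at $c$) is precisely what converts the ``shifted'' exponential tail into one the lemma can consume. Everything else is a one-line combination of the two cited results.
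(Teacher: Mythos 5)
Your proof is correct and is essentially the paper's own argument: the paper simply says to combine Theorem~\ref{thm:exp_subopt} with Lemma~\ref{lem:mean_with_exp_bound}, and your clipping at $c=(2\Delta_u/\epsilon)\log|\mathcal{Q}|$ (or equivalently splitting the tail integral at $c$) is exactly the bookkeeping that combination requires. No gaps; the reparametrization $t\mapsto \epsilon t/(2\Delta_u)$ and the bound $\mathbb{E}[X]\le \mathbb{E}[Y]+c$ are handled correctly.
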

Finally, there are two very useful theorems that enable construction
of new differentially private mechanisms from existing ones. 
\begin{thm}
[Post-processing]\label{thm:post-processing}Suppose a mechanism~$M\colon\mathcal{D}\to\mathcal{Q}$
preserves $\epsilon$-differential privacy. Then for any function~$f$,
the (functional) composition $f\circ M$ also preserves $\epsilon$-differential
privacy.
\end{thm}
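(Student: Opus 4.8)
The plan is to exploit the fact that post-composition with a deterministic function only coarsens the output of the mechanism, so that every event concerning $f\circ M$ pulls back to an event concerning $M$ itself. Concretely, I would fix a pair of adjacent databases $D$ and $D'$ and an arbitrary (measurable) subset $\mathcal{S}$ of the range of $f$, and introduce the preimage $\mathcal{R}:=f^{-1}(\mathcal{S})=\{q : f(q)\in\mathcal{S}\}$. The first step is the set-theoretic identity $\{f(M(D))\in\mathcal{S}\}=\{M(D)\in\mathcal{R}\}$, which holds pointwise on the underlying probability space and hence gives the equality of probabilities $\mathbb{P}(f(M(D))\in\mathcal{S})=\mathbb{P}(M(D)\in\mathcal{R})$, and likewise with $D$ replaced by $D'$.

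The second step is simply to invoke the hypothesis: since $M$ is $\epsilon$-differentially private and $\mathcal{R}$ is an admissible subset of $\mathcal{Q}$, we have $\mathbb{P}(M(D)\in\mathcal{R})\le e^{\epsilon}\,\mathbb{P}(M(D')\in\mathcal{R})$. Chaining this inequality with the two identities from the first step yields $\mathbb{P}(f(M(D))\in\mathcal{S})\le e^{\epsilon}\,\mathbb{P}(f(M(D'))\in\mathcal{S})$, which is exactly the definition of $\epsilon$-differential privacy for the mechanism $f\circ M$. Since $D$, $D'$, and $\mathcal{S}$ were arbitrary, this completes the argument.

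The only point requiring any care — and the natural candidate for "the hard part," although it is really just a hypothesis to state cleanly rather than an obstacle — is measurability: the set $\mathcal{R}=f^{-1}(\mathcal{S})$ must belong to the $\sigma$-algebra over which the output distribution of $M$ is defined, so that the differential privacy inequality for $M$ may legitimately be applied at $\mathcal{R}$. This is automatic once $f$ is taken to be measurable and one restricts attention to measurable $\mathcal{S}$, which is the standard convention, and I would simply note this. No tail bounds, sensitivity computations, or features specific to the Laplace or exponential mechanisms enter; the statement is a purely structural consequence of the definition of differential privacy and would hold verbatim for any randomized mechanism $M$ (and, with an additional conditioning/Fubini step, even for a randomized post-processing map independent of $M$).
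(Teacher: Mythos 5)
Your argument is correct: the preimage identity $\{f(M(D))\in\mathcal{S}\}=\{M(D)\in f^{-1}(\mathcal{S})\}$ plus the differential-privacy inequality for $M$ applied at $\mathcal{R}=f^{-1}(\mathcal{S})$ is exactly the standard proof of the post-processing theorem, and your measurability caveat is the right one to flag. The paper states this result without proof (it is quoted as a known tool), so there is nothing to compare against beyond noting that your argument is the canonical one and is complete as written.
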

\begin{thm}
[Seqential composition~\cite{mcsherry2009privacy}]\label{thm:seq_composition}Suppose
a mechanism~$M_{1}$ preserves $\epsilon_{1}$-differential privacy,
and another mechanism~$M_{2}$ preserves $\epsilon_{2}$-differential
privacy. Define a new mechanism~$M(D):=(M_{1}(D),M_{2}(D))$. Then
the mechanism~$M$ preserves $(\epsilon_{1}+\epsilon_{2})$-differential
privacy.\end{thm}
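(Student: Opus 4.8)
The plan is to reduce the set-valued privacy inequality to a pointwise statement about output densities (or probability mass functions), exploit the product structure of $M=(M_1,M_2)$ under the standing assumption that $M_1$ and $M_2$ use independent internal randomness, and then integrate back over an arbitrary measurable set. First I would fix a pair of adjacent databases $D,D'$, write $\mathcal{Q}_1,\mathcal{Q}_2$ for the ranges of $M_1,M_2$, and let $p_i(\cdot\mid D)$ denote the density of $M_i(D)$ with respect to a common reference measure on $\mathcal{Q}_i$ (counting measure in the discrete case, Lebesgue measure in the continuous case); such densities exist for all mechanisms considered in this section. The first observation is that the set-based $\epsilon_i$-differential privacy of $M_i$ forces the pointwise bound $p_i(r_i\mid D)\le e^{\epsilon_i}\,p_i(r_i\mid D')$ for almost every $r_i\in\mathcal{Q}_i$, since otherwise integrating the density ratio over the set where the bound fails would contradict the definition applied to that set.

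Next, by independence the joint density of $M(D)=(M_1(D),M_2(D))$ factors as $p(r_1,r_2\mid D)=p_1(r_1\mid D)\,p_2(r_2\mid D)$. Combining this with the two pointwise bounds gives, for almost every $(r_1,r_2)\in\mathcal{Q}_1\times\mathcal{Q}_2$,
\[
p(r_1,r_2\mid D)=p_1(r_1\mid D)\,p_2(r_2\mid D)\le e^{\epsilon_1}p_1(r_1\mid D')\cdot e^{\epsilon_2}p_2(r_2\mid D')=e^{\epsilon_1+\epsilon_2}\,p(r_1,r_2\mid D').
\]
Integrating this inequality over an arbitrary measurable $\mathcal{R}\subseteq\mathcal{Q}_1\times\mathcal{Q}_2$ yields $\mathbb{P}(M(D)\in\mathcal{R})\le e^{\epsilon_1+\epsilon_2}\,\mathbb{P}(M(D')\in\mathcal{R})$, which is precisely $(\epsilon_1+\epsilon_2)$-differential privacy for $M$.

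I expect the main obstacle to be the passage from the set-valued definition of differential privacy to the pointwise (almost-everywhere) density bound: this step implicitly needs an absolute-continuity hypothesis between the output laws of $M_i(D)$ and $M_i(D')$, or else a direct verification for the specific mechanisms at hand. Relatedly, if one wants the fully adaptive version of the theorem, in which $M_2$ is allowed to depend on the realized output of $M_1$, the clean factorization of the joint density breaks down; in that case I would replace it by a conditioning argument, writing $\mathbb{P}(M(D)\in\mathcal{R})$ as an iterated integral (Fubini) over the law of $M_1(D)$ of the conditional probability $\mathbb{P}(M_2(D)\in\mathcal{R}_{r_1})$ along slices $\mathcal{R}_{r_1}=\{r_2:(r_1,r_2)\in\mathcal{R}\}$, bounding the inner probability by $e^{\epsilon_2}$ times its $D'$ counterpart uniformly in $r_1$, and then bounding the outer measure by $e^{\epsilon_1}$ times its $D'$ counterpart. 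Since only the independent, density-based version is needed for the mechanisms used later in this paper, I would present that version as the proof and mention the adaptive extension in a remark.
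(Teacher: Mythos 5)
The paper does not actually prove this theorem: it is imported as background from McSherry's composition result (the citation to~\cite{mcsherry2009privacy}) and used as a black box, so there is no in-paper proof to compare against. Your argument is a correct and entirely standard proof of the non-adaptive version, and the two caveats you flag are exactly the right ones. Two small remarks. First, once you assume densities with respect to a common dominating measure, the passage from the set-based definition to the almost-everywhere bound $p_i(r\mid D)\le e^{\epsilon_i}p_i(r\mid D')$ goes through just as you say (the ``bad'' set has positive reference measure and yields a contradiction), and no separate absolute-continuity hypothesis is needed beyond that common dominating measure -- the a.e.\ bound automatically forces $p_i(\cdot\mid D)$ to vanish wherever $p_i(\cdot\mid D')$ does. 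Second, the conditioning/Fubini route you sketch for the adaptive case is in fact the cleaner way to prove even the non-adaptive statement, since it avoids assuming densities at all: writing $\mathbb{P}(M(D)\in\mathcal{R})=\int\mathbb{P}\bigl(M_{2}(D)\in\mathcal{R}_{r_{1}}\bigr)\,d\mathbb{P}_{M_{1}(D)}(r_{1})$, bounding the integrand uniformly by $e^{\epsilon_{2}}$ times its $D'$ counterpart, and then using the set-based $\epsilon_{1}$-bound on $M_{1}$ (extended from indicator functions to nonnegative bounded measurable integrands by simple-function approximation) gives the result in full generality, including independence-only and adaptive settings alike. For the purposes of this paper, where the theorem is only invoked for finitely many iterations of Algorithm~\ref{alg:dp_subgrad} with independent randomness, your density-based version suffices, but note that the application in Algorithm~\ref{alg:dp_subgrad-1} is really the adaptive one (each iteration's query point $x^{(i)}$ depends on previous outputs), so the conditioning argument is the one actually needed there.
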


\section{Privacy-preserving mechanisms\label{sec:Privacy-preserving-mechanisms}}

This section presents the main theoretical results of this paper.
In particular, we propose several mechanisms that are able to obtain
a differentially private solution to Problem~\ref{prob:private_pwa}.
We also give suboptimality analysis for most mechanisms and show the
trade-offs between optimality and privacy.

\subsection{The Laplace mechanism acting on the problem data}

One straightforward way of preserving differential privacy is to obtain
the optimal solution from a privatized version of problem~(\ref{eq:pwa_problem})
by publishing the \emph{entire} database~$D$ privately using the
vector Laplace mechanism described in Theorem~\ref{thm:vector_laplace}.
Privacy is guaranteed by the post-processing rule: once the problem
is privatized, obtaining the optimal solution can be viewed as post-processing
and does not change the level of privacy due to Theorem~\ref{thm:post-processing}. 
\begin{thm}
The mechanism that outputs $M_{P}(b)=b+w_{P}$, where~$w_{P}$ is
drawn from the probability density function that is proportional to~$\exp(-\epsilon\norm{w_{P}}_{2}/\sqrt{m}b_{\max})$,
is $\epsilon$-differentially private.\end{thm}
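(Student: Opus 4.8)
The plan is to recognize $M_P$ as nothing more than the vector Laplace mechanism of Theorem~\ref{thm:vector_laplace} applied to the \emph{identity query} on the database, and to check that the noise scale $\sqrt{m}\,b_{\max}$ appearing in the stated density is exactly (an upper bound on) the $\ell_2$-sensitivity of that query under the adjacency relation~(\ref{eq:def_adj}). So first I would take the query to be $q(D)=b=(b_1,\dots,b_m)\in\reals^m$, so that running $M_P$ on $D$ is literally running the vector Laplace mechanism on $q$ with additive perturbation $w_P$.

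Next I would bound the $\ell_2$-sensitivity of $q$. For any adjacent pair $D=\{b_i\}_{i=1}^m$ and $D'=\{b_i'\}_{i=1}^m$, the adjacency condition~(\ref{eq:def_adj}) gives $|b_i-b_i'|\le b_{\max}$ for every $i$, hence
\[
\norm{q(D)-q(D')}_2=\Bigl(\textstyle\sum_{i=1}^m (b_i-b_i')^2\Bigr)^{1/2}\le\sqrt{m}\,b_{\max},
\]
so that $\Delta_2=\max_{D,D'}\norm{q(D)-q(D')}_2\le\sqrt{m}\,b_{\max}$; in fact equality holds, since the bound is attained by taking $b_i'=b_i+b_{\max}$ for all $i$, which yields an adjacent database.

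Then I would invoke Theorem~\ref{thm:vector_laplace} directly: adding a random vector $w_P$ whose density is proportional to $\exp(-\epsilon\norm{w_P}_2/\Delta_2)$ preserves $\epsilon$-differential privacy, and with $\Delta_2=\sqrt{m}\,b_{\max}$ this is exactly the mechanism in the statement. Since this is essentially an immediate corollary of Theorem~\ref{thm:vector_laplace}, there is no real obstacle; the only point requiring care is the sensitivity bookkeeping, i.e.\ passing from the $\ell_\infty$-type adjacency bound to the $\ell_2$ bound through the factor $\sqrt{m}$. (Should one wish to work only with the inequality $\Delta_2\le\sqrt{m}\,b_{\max}$ rather than the exact value, I would add the standard remark that enlarging the noise scale cannot weaken privacy: in the density-ratio argument underlying Theorem~\ref{thm:vector_laplace}, the exponent is bounded by $\epsilon\norm{q(D)-q(D')}_2/(\sqrt{m}\,b_{\max})\le\epsilon\Delta_2/(\sqrt{m}\,b_{\max})\le\epsilon$ by the triangle inequality.) Finally, although not required by the statement, I would note in passing that by the post-processing rule (Theorem~\ref{thm:post-processing}) any deterministic computation on the released $M_P(b)$ — in particular solving the perturbed instance of~(\ref{eq:pwa_problem}) — inherits the same $\epsilon$-differential privacy guarantee.
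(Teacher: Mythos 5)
Your proposal is correct and follows essentially the same route as the paper: treat the query as the identity map $q(D)=b$, compute its $\ell_2$-sensitivity $\sqrt{m}\,b_{\max}$ from the adjacency relation, and invoke Theorem~\ref{thm:vector_laplace}. The extra remarks on tightness of the sensitivity bound and on post-processing are fine but not needed.
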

\begin{proof}
In this case, the query is $b$, whose $\ell_{2}$-sensitivity can
be obtained as $\Delta=\max_{b,b'}\norm{b-b'}_{2}=\sqrt{m}b_{\mathrm{max}}.$
Combining with Theorem~\ref{thm:vector_laplace} completes the proof.
\end{proof}

\subsection{The Laplace mechanism acting on the problem solution}

Another way of preserving differential privacy is to apply the vector
Laplace mechanism directly on the optimal solution $x_{\mathrm{opt}}(D)$
of the problem: $M_{S}(D)=x_{\mathrm{opt}}(D)+w_{S}$. The additive
noise $w_{S}$ is drawn from the distribution proportional to~$\exp(-\epsilon\norm{w_{S}}_{2}/\sqrt{d}\Delta)$,
where~$\Delta$ is the sensitivity of the optimal solution, i.e.,
\[
\Delta=\max_{D,D'\colon\mathrm{Adj}(D,D')}\norm{x_{\mathrm{opt}}(D)-x_{\mathrm{opt}}(D')}_{2}.
\]
This mechanism is $\epsilon$-differentially private also due to Theorem~\ref{thm:vector_laplace}.

Unfortunately, it is generally difficult to analyze how the optimal
solution~$x_{\mathrm{opt}}(D)$ changes with~$D$, and hence the
exact value of~$\Delta$ is often unavailable. However, when the
set $\mathcal{P}$ is compact, an upper bound of~$\Delta$ can be
given by the diameter of $\mathcal{P}$, defined as $\mathrm{diam}(\mathcal{P}):=\max_{x,y\in\mathcal{P}}\norm{x-y}_{2}.$
Although~$\mathrm{diam}(\mathcal{P})$ is still difficult to compute
for a generic set~$\mathcal{P}$, there are several cases where its
exact value or an upper bound can be computed efficiently. One simple
case is when $\mathcal{P}=\{x\colon0\leq x_{i}\leq1,\: i=1,2,\dots,d\}$
is a hypercube and hence~$\mathrm{diam}(\mathcal{P})=\sqrt{d}$.
In the more general case where~$\mathcal{P}$ is described by a set
of linear inequalities, an upper bound can be obtained by computing
the longest axis of the Löwner-John ellipsoid of~$\mathcal{P}$,
i.e., the minimum-volume ellipsoid that covers~$\mathcal{P}$. The
Löwner-John ellipsoid can be approximated from the maximum-volume
inscribed ellipsoid, which can be obtained by solving a convex optimization
problem (in particular, a semidefinite problem, cf.~\cite[page 414]{boyd2004convex}
).

Suboptimality analysis for this mechanism is given by the following
theorem.
\begin{thm}
\label{thm:solution_subopt}Define~$G=\max_{i}\norm{a_{i}}_{2}$.
The expected suboptimality for the solution perturbation mechanism~$M_{S}$
is bounded as 
\[
\mathbb{E}[f(M_{S}(D))-f(x_{\mathrm{opt}})]\leq Gd^{3/2}\Delta/\epsilon.
\]
\end{thm}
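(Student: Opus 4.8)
The plan is to bound the suboptimality pointwise in terms of the perturbation $w_S$ and then take expectations using the fact that the additive noise is the vector Laplace (Gamma-distributed) noise from Theorem~\ref{thm:vector_laplace}. First I would observe that $f$ is Lipschitz: since $f(x) = \max_i\{a_i^T x + b_i\}$ is a pointwise maximum of affine functions with gradients $a_i$, it is Lipschitz with constant $G = \max_i \norm{a_i}_2$ with respect to the $\ell_2$-norm. Hence for any $D$,
\[
f(M_S(D)) - f(x_{\mathrm{opt}}) = f(x_{\mathrm{opt}}(D) + w_S) - f(x_{\mathrm{opt}}(D)) \leq G\,\norm{w_S}_2 .
\]
Taking expectations gives $\mathbb{E}[f(M_S(D)) - f(x_{\mathrm{opt}})] \leq G\,\mathbb{E}[\norm{w_S}_2]$, so the whole problem reduces to computing (or bounding) $\mathbb{E}[\norm{w_S}_2]$.

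Next I would compute $\mathbb{E}[\norm{w_S}_2]$ for the distribution with density proportional to $\exp(-\epsilon\norm{w_S}_2/(\sqrt d\,\Delta))$ on $\mathbb{R}^d$. Writing $b := \sqrt d\,\Delta/\epsilon$ for the scale parameter, the radial part $r = \norm{w_S}_2$ has density proportional to $r^{d-1} e^{-r/b}$, i.e., $r/b \sim \mathrm{Gamma}(d,1)$, so $\mathbb{E}[r] = d\,b = d^{3/2}\Delta/\epsilon$. Substituting back,
\[
\mathbb{E}[f(M_S(D)) - f(x_{\mathrm{opt}})] \leq G\, \mathbb{E}[\norm{w_S}_2] = G\,d^{3/2}\Delta/\epsilon,
\]
which is exactly the claimed bound.

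The only subtlety is justifying $\mathbb{E}[r] = d\,b$ cleanly. One route is the Gamma-function computation above; a more self-contained alternative, consistent with the elementary style of the paper, is to note that each coordinate of $w_S$ can be stochastically dominated in absolute value (or that $\norm{w_S}_2 \leq \sqrt d \max_j |(w_S)_j|$ is too lossy) — but in fact the cleanest is the direct radial integral, since $\int_0^\infty r^{d-1}e^{-r/b}\,dr = b^d\,\Gamma(d)$ and $\int_0^\infty r^{d}e^{-r/b}\,dr = b^{d+1}\,\Gamma(d+1)$, whose ratio is $b\,\Gamma(d+1)/\Gamma(d) = b d$. I expect this normalization-constant bookkeeping to be the main (though routine) obstacle; everything else follows immediately from Lipschitz continuity of the piecewise-affine objective. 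A minor point worth stating is that the bound holds for every database $D$ (the Lipschitz constant $G$ does not depend on $D$), so no assumption on $\mathcal{P}$ beyond what is already in force is needed for the suboptimality estimate itself.
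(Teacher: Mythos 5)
Your proposal is correct and follows essentially the same route as the paper: Lipschitz continuity of $f$ with constant $G$, reduction to $G\,\mathbb{E}\norm{w_S}_2$, and evaluation of that expectation as $d\cdot\sqrt{d}\Delta/\epsilon$ using the Gamma-distributed radial part of the vector Laplace noise. The only difference is that you spell out the radial integral giving $\mathbb{E}\norm{w_S}_2=d^{3/2}\Delta/\epsilon$, which the paper states without derivation.
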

\begin{proof}
Since $f(x)-f(x_{\mathrm{opt}})\geq0$ for all $x\in\mathcal{P}$,
we have 
\[
\mathbb{E}[f(M_{S}(D))-f(x_{\mathrm{opt}})]=\mathbb{E}|f(M_{S}(D))-f(x_{\mathrm{opt}})|.
\]
It is not difficult to show that~$f$ is Lipschitz with~$G$ as
the Lipschitz constant, i.e., $|{\normalcolor f(x)-f(y)|}\leq G\norm{x-y}_{2},$which
leads to
\[
\mathbb{E}[f(M_{S}(D))-f(x_{\mathrm{opt}})]\leq G\mathbb{E}\norm{M_{S}(D)-x_{\mathrm{opt}}}_{2}=G\mathbb{E}\norm{w_{S}}_{2}=Gd\cdot\sqrt{d}\Delta/\epsilon=Gd^{3/2}\Delta/\epsilon.
\]

\end{proof}
Theorem~\ref{thm:solution_subopt} shows that the expected suboptimality
grows as~$\epsilon$ decreases (i.e., the level of privacy increases).
The suboptimality also grows with~$d$, which is the dimension of
the decision variable~$x$.

\subsection{The exponential mechanism}

To use the exponential mechanism for privately solving minimization
problems, one natural choice of the scoring function is the negative
objective function $-f$. However, this choice may not work in all
cases, since changes in user data can lead to an infeasible problem,
which yields unbounded sensitivity. Even when the problem remains
feasible, the sensitivity of the objective function with respect to
data can be difficult to compute for a generic optimization problem.
Nevertheless, the following shows that the sensitivity for our problem
is bounded and can be computed exactly.
\begin{lem}
\label{lem:sensitivity_exp_mech}Suppose the scoring function is given
as 
\begin{align*}
u(x,D) & =-f(x,D)=-\max_{i=1,2,\dots,m}\{a_{i}^{T}x+b_{i}\}.
\end{align*}
Then the sensitivity of~$u$ for the adjacency relation defined in~(\ref{eq:def_adj})
is~$\Delta_{u}=b_{\max}$.\end{lem}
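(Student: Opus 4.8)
The plan is to bound $|u(x,D) - u(x,D')|$ uniformly over $x \in \mathcal{P}$ for an arbitrary pair of adjacent databases $D = \{b_i\}$ and $D' = \{b_i'\}$, and then exhibit a single $x$ and a single adjacent pair achieving equality. Write $f(x,D) = \max_i \{a_i^T x + b_i\}$ and $f(x,D') = \max_i \{a_i^T x + b_i'\}$. The key observation is that the pointwise maximum of a family of functions is $1$-Lipschitz (in the sup-norm) with respect to additive perturbations of those functions: if $g_i$ and $h_i$ are two families with $\sup_i |g_i - h_i| \le c$, then $|\max_i g_i - \max_i h_i| \le c$. Applying this with $g_i = a_i^T x + b_i$ and $h_i = a_i^T x + b_i'$, the $a_i^T x$ terms cancel inside the difference $g_i - h_i = b_i - b_i'$, so $\sup_i |g_i - h_i| = \max_i |b_i - b_i'| \le b_{\max}$ by the adjacency relation~(\ref{eq:def_adj}). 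Hence $|f(x,D) - f(x,D')| \le b_{\max}$ for every $x$, which gives $|u(x,D) - u(x,D')| \le b_{\max}$ and therefore $\Delta_u \le b_{\max}$.

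Next I would prove the lower bound $\Delta_u \ge b_{\max}$ by choosing a concrete worst case. Take $D$ with $b_i = 0$ for all $i$ and $D'$ with $b_i' = b_{\max}$ for all $i$; these are adjacent since $\max_i |b_i - b_i'| = b_{\max}$. Then for any fixed $x$, $f(x,D') = \max_i\{a_i^T x\} + b_{\max} = f(x,D) + b_{\max}$, so $|u(x,D) - u(x,D')| = b_{\max}$ and the supremum defining $\Delta_u$ is attained. Combining the two bounds yields $\Delta_u = b_{\max}$.

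The proof of the elementary Lipschitz fact is the only step requiring a line of argument: fix $x$ and let $i^\star$ attain $\max_i g_i$, so that $\max_i g_i = g_{i^\star} \le h_{i^\star} + c \le \max_i h_i + c$; by symmetry $\max_i h_i \le \max_i g_i + c$, giving $|\max_i g_i - \max_i h_i| \le c$. There is no real obstacle here — the main thing to be careful about is that the cancellation of the linear terms $a_i^T x$ inside the \emph{difference} $g_i - h_i$ is what makes the bound independent of $x$ and of the (public) data $\{a_i\}$; this is exactly why the problem restricts the private data to the intercepts $\{b_i\}$ rather than the slopes.
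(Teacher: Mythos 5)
Your proof is correct and is essentially the paper's argument in different packaging: the paper fixes $x$, takes the maximizing indices $j$ and $k$ under $D$ and $D'$, and sandwiches $u(x,D')-u(x,D)$ between $b_k-b_k'$ and $b_j-b_j'$, which is exactly the two-sided maximizer comparison inside your general Lipschitz-of-max lemma. The one small difference is that you explicitly exhibit an adjacent pair ($b_i=0$, $b_i'=b_{\max}$) attaining the bound, whereas the paper simply asserts that maximizing over adjacent pairs yields $b_{\max}$; your witness is a harmless (and slightly more careful) addition.
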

\begin{proof}
See Appendix~\ref{sub:proof_sensitivity_exp_mech}.
\end{proof}
As a result of Theorem~\ref{thm:exp_mech} and Lemma~\ref{lem:sensitivity_exp_mech},
we know that we can achieve $\epsilon$-differential privacy by using
the exponential mechanism given in the following theorem.
\begin{thm}
The exponential mechanism~$M_{E}$, which randomly outputs $\tilde{x}_{\mathrm{opt}}$
according the probability density function
\begin{equation}
\frac{\exp(-\epsilon f(\tilde{x}_{\mathrm{opt}},D)/2b_{\max})}{\int_{x\in\mathcal{P}}\exp(-\epsilon f(x,D)/2b_{\max})\, dx},\label{eq:exp_dist_f}
\end{equation}
is $\epsilon$-differentially private.\end{thm}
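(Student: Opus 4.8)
The plan is to recognize this theorem as an immediate specialization of the generic exponential mechanism (Theorem~\ref{thm:exp_mech}) to the present setting. First I would take the target domain to be $\mathcal{Q}=\mathcal{P}$ and the scoring function to be $u(x,D)=-f(x,D)=-\max_{i=1,\dots,m}\{a_i^T x+b_i\}$, exactly as in Lemma~\ref{lem:sensitivity_exp_mech}. This is the natural choice because maximizing the score coincides with minimizing the objective $f$, so the most likely outputs of the mechanism are (approximately) optimal solutions of~(\ref{eq:pwa_problem}). By Lemma~\ref{lem:sensitivity_exp_mech}, the global sensitivity of this scoring function under the adjacency relation~(\ref{eq:def_adj}) equals $\Delta_u=b_{\max}$, which is finite; hence the generic construction is applicable and produces a bona fide $\epsilon$-differentially private mechanism.

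Next I would substitute $u$ and $\Delta_u=b_{\max}$ into the defining density of the exponential mechanism, namely $\exp(\epsilon u(D,q)/2\Delta_u)\big/\!\int_{q'\in\mathcal{Q}}\exp(\epsilon u(D,q')/2\Delta_u)\,dq'$. With $q=\tilde{x}_{\mathrm{opt}}$, $\mathcal{Q}=\mathcal{P}$, and $u(x,D)=-f(x,D)$, this expression becomes precisely~(\ref{eq:exp_dist_f}). Consequently $M_E$ as stated is nothing other than the exponential mechanism $M_E(D;u)$ instantiated with this particular scoring function, and Theorem~\ref{thm:exp_mech} immediately yields $\epsilon$-differential privacy. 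There is essentially no hard step: the content is entirely in Lemma~\ref{lem:sensitivity_exp_mech}, which has already been established (modulo the appendix), and everything else is bookkeeping.

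The one point that deserves a sentence of care is well-definedness of the density~(\ref{eq:exp_dist_f}), i.e.\ that the normalizing integral $\int_{\mathcal{P}}\exp(-\epsilon f(x,D)/2b_{\max})\,dx$ is positive and finite. Positivity holds as soon as $\mathcal{P}$ has positive Lebesgue measure; finiteness holds because $f$ is continuous (in fact piecewise affine) and hence bounded on the bounded polytope $\mathcal{P}$, so the integrand is bounded. Thus, under the standing assumption that $\mathcal{P}$ is a compact polytope with nonempty interior, no obstacle arises. If one wished to allow an unbounded $\mathcal{P}$, one would instead need a coercivity/growth condition on $f$ to guarantee integrability; I would remark on this but not pursue it, since the suboptimality analyses elsewhere in the paper already presume a compact feasible set.
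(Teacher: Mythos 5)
Your proposal is correct and matches the paper's own argument: the paper likewise obtains the theorem by instantiating the generic exponential mechanism (Theorem~\ref{thm:exp_mech}) with the scoring function $u(x,D)=-f(x,D)$ on $\mathcal{Q}=\mathcal{P}$ and the sensitivity $\Delta_u=b_{\max}$ from Lemma~\ref{lem:sensitivity_exp_mech}, so that the density specializes to~(\ref{eq:exp_dist_f}). Your remark on well-definedness of the normalizing integral corresponds to the paper's own remark following the theorem (and is, if anything, slightly more careful about the unbounded-$\mathcal{P}$ case).
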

\begin{rem}
The denominator in~(\ref{eq:exp_dist_f}) needs to remain bounded
in order for~(\ref{eq:exp_dist_f}) to be a valid probability distribution.
This trivially holds when $\mathcal{P}$ is bounded. When $\mathcal{P}$
is unbounded, this can be shown by using the fact that $f(x,D)$ is
affine in $x$ and the integrand decreases exponentially fast as $\norm{x}\to\infty$. 
\end{rem}
Suboptimality analysis for the exponential mechanism is given by the
following theorem.
\begin{thm}
\label{thm:exp_pwa_subopt}The expected suboptimality for the exponential
mechanism $M_{E}$ is bounded as
\[
\mathbb{E}[f(M_{E}(D))-f_{\mathrm{opt}}]\leq C(0,\epsilon)\cdot2b_{\max}/\epsilon,
\]
where $f_{\mathrm{opt}}=\min_{x\in\mathcal{P}}f(x,D)$ and for any~$\gamma\geq0$,
\[
C(\gamma,\epsilon)=\frac{\exp(-\epsilon f_{\mathrm{opt}}/2b_{\max})\int_{x\colon f(x,D)\geq f_{\mathrm{opt}}+\gamma}\, dx}{\int_{x\in\mathcal{P}}\exp(-\epsilon f(x,D)/2b_{\max})\, dx}.
\]
\end{thm}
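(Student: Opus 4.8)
The plan is to reproduce, in the continuous setting, the tail-bound-plus-integration argument that yields Theorem~\ref{thm:exp_mean_subopt} in the finite case. Set $X := f(M_E(D),D) - f_{\mathrm{opt}}$. Because $M_E(D)\in\mathcal{P}$ and $f_{\mathrm{opt}}=\min_{x\in\mathcal{P}}f(x,D)$, we have $X\geq 0$, so by the same identity exploited in Lemma~\ref{lem:mean_with_exp_bound} it suffices to bound $\mathbb{E}[X]=\int_0^\infty \mathbb{P}(X\geq t)\,dt$.

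First I would write the tail probability explicitly from the density~(\ref{eq:exp_dist_f}): for $t\geq 0$,
\[
\mathbb{P}(X\geq t)=\frac{\int_{x\colon f(x,D)\geq f_{\mathrm{opt}}+t}\exp(-\epsilon f(x,D)/2b_{\max})\,dx}{\int_{x\in\mathcal{P}}\exp(-\epsilon f(x,D)/2b_{\max})\,dx},
\]
where the numerator integral is tacitly over $x\in\mathcal{P}$ and the denominator is the (finite, positive) normalizing constant of~(\ref{eq:exp_dist_f}), whose finiteness is exactly the content of the Remark preceding the theorem. On the domain of the numerator integral we have $f(x,D)\geq f_{\mathrm{opt}}+t$, hence $\exp(-\epsilon f(x,D)/2b_{\max})\leq\exp(-\epsilon(f_{\mathrm{opt}}+t)/2b_{\max})$; pulling this constant out of the numerator gives precisely $\mathbb{P}(X\geq t)\leq e^{-\epsilon t/2b_{\max}}\,C(t,\epsilon)$.

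The crucial observation is that $\gamma\mapsto C(\gamma,\epsilon)$ is non-increasing on $[0,\infty)$, since $\gamma$ enters only through the Lebesgue measure of $\{x\in\mathcal{P}\colon f(x,D)\geq f_{\mathrm{opt}}+\gamma\}$, which can only shrink as $\gamma$ increases, while the other factors do not depend on $\gamma$. Therefore $C(t,\epsilon)\leq C(0,\epsilon)$ for every $t\geq 0$, so $\mathbb{P}(X\geq t)\leq C(0,\epsilon)\,e^{-\epsilon t/2b_{\max}}$ (a valid bound even on the range where its right-hand side exceeds $1$). Integrating over $t\in[0,\infty)$ then gives $\mathbb{E}[X]\leq C(0,\epsilon)\int_0^\infty e^{-\epsilon t/2b_{\max}}\,dt=C(0,\epsilon)\cdot 2b_{\max}/\epsilon$, which is the claim.

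I do not anticipate a real obstacle; the one subtlety is that, in contrast with Theorem~\ref{thm:exp_subopt}, the tail bound carries the prefactor $C(0,\epsilon)$ rather than being a pure $e^{-\alpha t}$, so Lemma~\ref{lem:mean_with_exp_bound} cannot be quoted verbatim — one simply repeats its one-line integration (equivalently, records the trivial extension: if $X\geq 0$ and $\mathbb{P}(X\geq t)\leq c\,e^{-\alpha t}$ then $\mathbb{E}[X]\leq c/\alpha$). It is also worth stating, perhaps alongside the preceding Remark, that the whole argument presupposes the normalizing integral is finite, i.e., that $M_E$ is well-defined to begin with.
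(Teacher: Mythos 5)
Your proof is correct and follows essentially the same route as the paper's: derive the tail bound $\mathbb{P}[f(M_E(D))-f_{\mathrm{opt}}\geq t]\leq C(t,\epsilon)e^{-\epsilon t/2b_{\max}}$ by bounding the exponential weight on the superlevel set, use $C(t,\epsilon)\leq C(0,\epsilon)$, and integrate the tail. Your side remark that Lemma~\ref{lem:mean_with_exp_bound} needs the trivial extension to a bound of the form $c\,e^{-\alpha t}$ (giving $\mathbb{E}[X]\leq c/\alpha$) is a fair point of rigor that the paper glosses over when it cites the lemma directly, but it does not change the argument.
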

\begin{proof}
We first prove that for any~$\gamma\geq0$, 
\begin{equation}
\mathbb{P}[f(M_{E}(D))-f_{\mathrm{opt}}\geq\gamma]\leq C(\gamma,\epsilon)\exp(-\epsilon\gamma/2b_{\max}).\label{eq:prob_exp_mech}
\end{equation}
For any given $a\in\mathbb{R}$, the exponential mechanism~$M_{E}(D)$
with scoring function $u$ satisfies 
\begin{align*}
\mathbb{P}[u(M_{E}(D))\leq a] & =\frac{\int_{x:u(x,D)\leq a}\exp(\epsilon u(x,D)/2b_{\max})\, dx}{\int_{x\in\mathcal{P}}\exp(\epsilon u(x,D)/2b_{\max})\, dx}\\
 & \leq\frac{\exp(\epsilon a/2b_{\max})\int_{x:u(x,D)\leq a}\, dx}{\int_{x\in\mathcal{P}}\exp(\epsilon u(x,D)/2b_{\max})\, dx}.
\end{align*}
Set $u(x,D)=-f(x,D)$ and $a=-(\gamma+f_{\mathrm{opt}})$ to obtain~(\ref{eq:prob_exp_mech}).

Note that $C(\gamma,\epsilon)\leq C(0,\epsilon)$ for all $\gamma\geq0$.
Then
\begin{equation}
\mathbb{P}[f(M_{E}(D))-f_{\mathrm{opt}}\geq\gamma]\leq C(0,\epsilon)\exp(-\epsilon\gamma/2b_{\max}).\label{eq:prob_exp_mech-2}
\end{equation}
Apply Lemma~\ref{lem:mean_with_exp_bound} on~(\ref{eq:prob_exp_mech-2})
to complete the proof. 
\end{proof}
It can be shown that~$C(0,\epsilon)$ increases as~$\epsilon$ decreases.
Therefore, similar to the solution perturbation mechanism~$M_{S}$
described in Theorem~\ref{thm:solution_subopt}, the expected suboptimality
of~$M_{E}$ grows as~$\epsilon$ decreases.

\subsection{Private subgradient method}

If privacy is not a concern, one way to solve the optimization problem~(\ref{eq:pwa_problem})
is to use the subgradient method, which iteratively searches for an
optimal solution by moving along the direction of a subgradient. Although
the direction of a subgradient is not necessarily a descent direction,
the subgradient method is still guaranteed to converge if one keeps
track of the best (rather than the most recent) solution among all
past iterations. Recall that $g$ is a subgradient of~$f$ at~$x_{0}$
if and only if for all $x$: 
\begin{equation}
f(x)\geq f(x_{0})+g^{T}(x-x_{0}).\label{eq:subgrad_cond}
\end{equation}
For a convex and piecewise affine function~$f$, its subgradient
at any given $x_{0}$ can be obtained as follows. First find $k\in\{1,2,\dots,m\}$
such that
\begin{equation}
a_{k}^{T}x_{0}+b_{k}=\max_{i=1,2,\cdots,m}\{a_{i}^{T}x_{0}+b_{i}\}.\label{eq:compute_subgrad}
\end{equation}
Then a subgradient at~$x_{0}$ is $a_{k}$, which can be verified
using the definition~(\ref{eq:subgrad_cond}).

It can be seen from~(\ref{eq:compute_subgrad}) that computing subgradients
requires access to the private data~$\{b_{i}\}_{i=1}^{m}$. Following
from Hsu et al.~\cite{hsu2014privately}, in order to preserve privacy
when applying any iterative method (such as the subgradient method),
one must make sure to: (1) privatize the computation during each iteration;
(2) limit the total number of iterations. 

One method for obtaining a subgradient privately is to perturb the
true subgradient by adding, e.g., Laplace noise~\cite{song2013stochastic}.
In our case, since the candidate subgradients come from a finite set~$\{a_{i}\}_{i=1}^{m}$,
we propose to use the exponential mechanism to privatize the computation
of subgradients. Choose the scoring function~$u_{\mathrm{sub}}\colon\{1,2,\cdots,m\}\to\mathbb{R}$
as $u_{\mathrm{sub}}(i;x,D)=a_{i}^{T}x+b_{i}$ (in the following,
we will sometimes drop~$D$ for conciseness). The sensitivity of
$u_{\mathrm{sub}}$ at any given~$x_{0}$, which is denoted as $\Delta_{u_{\mathrm{sub}}}(x_{0})$,
can be computed as
\[
\max_{i\in\{1,2,\dots,m\}}\max_{D,D'}|u_{\mathrm{sub}}(i;x_{0},D)-u_{\mathrm{sub}}(i;x_{0},D')|=b_{\max}.
\]

\begin{algorithm}[tbph]
\begin{enumerate}
\item Choose the scoring function~$u\colon\{1,2,\cdots,m\}\to\mathbb{R}$
as 
\[
u_{\mathrm{sub}}(i;x_{0})=a_{i}^{T}x_{0}+b_{i}.
\]

\item Select the index $i^{*}$ using the exponential mechanism: 
\[
\mathbb{P}(i^{*}=i)\propto\exp(-\epsilon u_{\mathrm{sub}}(i;x_{0})/2b_{\max}).
\]

\item Output~$a_{i^{*}}$ as the approximate subgradient at $x_{0}$.
\end{enumerate}
\caption{$\epsilon$-differentially private subgradient.\label{alg:dp_subgrad}}

\end{algorithm}
If the subgradient computation in the regular subgradient method is
replaced by Algorithm~\ref{alg:dp_subgrad}, the modified subgradient
method (Algorithm~\ref{alg:dp_subgrad-1}) can be shown to preserve
$\epsilon$-differential privacy using the sequential composition
theorem, since each iteration preserves~$(\epsilon/k)$-differential
privacy and the total number of iterations is~$k$.

\begin{algorithm}[tbph]
\begin{enumerate}
\item Choose the number of iterations~$k$, step sizes~$\{\alpha_{i}\}_{i=1}^{k}$,
and~$x^{(1)}\in\mathcal{P}$.
\item For $i=1,2,\dots,k$, repeat:

\begin{enumerate}
\item Obtain an $(\epsilon/k$)-private subgradient $g^{(i)}$ using Algorithm~\ref{alg:dp_subgrad};
\item Update~$x^{(i+1)}:=x^{(i)}-\alpha_{i}g^{(i)}.$
\end{enumerate}
\item Output~$x^{(k+1)}$ as the solution.
\end{enumerate}
\caption{$\epsilon$-differentially private subgradient method.\label{alg:dp_subgrad-1}}
\end{algorithm}

However, since the output of Algorithm~\ref{alg:dp_subgrad} does
not correspond to a true subgradient, it is natural to ask how this
affects convergence of the optimization procedure. For any given~$x_{0}$,
define
\begin{align*}
\gamma(j;x_{0}) & :=\max_{i\in\{1,2,\dots,m\}}u_{\mathrm{sub}}(i;x_{0})-u_{\mathrm{sub}}(j;x_{0})\\
 & =\max_{i=1,2,\dots,m}\{a_{i}^{T}x_{0}+b_{i}\}-(a_{j}^{T}x_{0}+b_{j}),
\end{align*}
which is the suboptimality gap of choosing~$a_{j}$ (in Algorithm~\ref{alg:dp_subgrad})
measured by the scoring function~$u_{\mathrm{sub}}$. As a consequence
of using the exponential mechanism in step~2 of Algorithm~\ref{alg:dp_subgrad},
an upper bound on the expectation of~$\gamma$ can be obtained using
Theorem~\ref{thm:exp_mean_subopt}:
\begin{equation}
\mathbb{E}_{i^{*}}[\gamma(i^{*};x_{0})]\leq2b_{\max}(1+\log m)/\epsilon.\label{eq:bound_exp_gamma}
\end{equation}
The following lemma shows how the suboptimality gap~$\gamma$ affects
the subgradient condition~(\ref{eq:subgrad_cond}) if~$a_{j}$ is
used as a subgradient.
\begin{lem}
\label{lem:subgrad_violation}For all $x$, it holds that
\[
f(x)\geq f(x_{0})+a_{j}^{T}(x-x_{0})-\gamma(j;x_{0}).
\]
\end{lem}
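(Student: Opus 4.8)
The plan is to exploit the two defining properties of a piecewise affine function: that each affine piece lies below $f$ everywhere, and that at the base point $x_0$ the active piece equals $f(x_0)$. Concretely, I would proceed in three short steps.

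First, since $f(x,D)=\max_{i=1,\dots,m}\{a_i^Tx+b_i\}$, for the particular index $j$ and every $x$ we have the elementary bound
\[
f(x)\geq a_j^Tx+b_j.
\]
Second, I would rewrite the right-hand side as an affine expansion about $x_0$:
\[
a_j^Tx+b_j=(a_j^Tx_0+b_j)+a_j^T(x-x_0).
\]
Third, I would identify the constant term using the definition of the suboptimality gap: because $f(x_0)=\max_{i}\{a_i^Tx_0+b_i\}$, the definition $\gamma(j;x_0)=\max_i\{a_i^Tx_0+b_i\}-(a_j^Tx_0+b_j)$ gives exactly $a_j^Tx_0+b_j=f(x_0)-\gamma(j;x_0)$. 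Substituting this into the previous display and then into the first inequality yields
\[
f(x)\geq f(x_0)+a_j^T(x-x_0)-\gamma(j;x_0),
\]
which is the claimed bound.

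There is essentially no obstacle here; the lemma is a bookkeeping observation that quantifies how much the subgradient inequality~(\ref{eq:subgrad_cond}) is violated when one uses $a_j$ (the gradient of a non-active affine piece) in place of a true subgradient $a_k$ of the active piece. The only point requiring minor care is keeping the roles straight: $f(x_0)$ corresponds to the \emph{maximizing} index at $x_0$, while $a_j^Tx_0+b_j$ is the value of the (possibly suboptimal) $j$-th piece at $x_0$, and their difference is precisely $\gamma(j;x_0)\geq 0$. Note that when $j$ happens to be an active index at $x_0$ we have $\gamma(j;x_0)=0$ and the statement reduces to the standard subgradient condition, as expected.
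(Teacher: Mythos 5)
Your proof is correct and follows essentially the same route as the paper: bound $f(x)$ below by the $j$-th affine piece, expand it about $x_0$, and substitute $a_j^Tx_0+b_j=f(x_0)-\gamma(j;x_0)$ from the definition of $\gamma$. No gaps.
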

\begin{proof}
We have
\begin{align*}
f(x) & =\max_{i=1,2,\dots,m}\{a_{i}^{T}x+b_{i}\}\\
 & \geq a_{j}^{T}x+b_{j}\\
 & =(a_{j}^{T}x_{0}+b_{j})+a_{j}^{T}(x-x_{0})\\
 & =\max_{i=1,2,\dots,m}\{a_{i}^{T}x_{0}+b_{i}\}-\gamma(j;x_{0})+a_{j}^{T}(x-x_{0})\\
 & =f(x_{0})+a_{j}^{T}(x-x_{0})-\gamma(j;x_{0}).
\end{align*}
\end{proof}
\begin{rem}
If $j=\arg\max_{i\in\{1,2,\dots,m\}}u_{\mathrm{sub}}(i;x_{0})$, i.e.,
$a_{j}$ is a true subgradient, then Lemma~\ref{lem:subgrad_violation}
recovers the original definition of subgradient:
\[
f(x)\geq f(x_{0})+a_{j}^{T}(x-x_{0}).
\]

\end{rem}
Lemma~\ref{lem:subgrad_violation} shows that the suboptimality gap~$\gamma$
also characterizes the extent that the subgradient condition is violated.
Now we are ready to show the expected suboptimality of the differentially
private subgradient method using both the bound~(\ref{eq:bound_exp_gamma})
on~$\mathbb{E}_{i^{*}}[\gamma(i^{*};x_{0})]$ and Lemma~\ref{lem:subgrad_violation}. 
\begin{thm}
\label{thm:dp_subgrad_subopt}When the $\epsilon$-differentially
private subgradient method is applied, the expected suboptimality
after~$k$ iterations is bounded as
\begin{equation}
\mathbb{E}\left[\min_{i=1,2,\dots,k}f(x^{(i)})-f_{\mathrm{opt}}\right]\leq\frac{R^{2}+G^{2}\sum_{i=1}^{k}\alpha_{i}^{2}}{2\sum_{i=1}^{k}\alpha_{i}}+\bar{\gamma}(\epsilon/k),\label{eq:dp_subgrad_subopt}
\end{equation}
where~$R=\mathrm{diam}(\mathcal{P})$, $G=\max_{i=1,2,\dots,m}\norm{a_{i}}_{2}$,
and~$\bar{\gamma}(z)=2b_{\max}(1+\log m)/z$.\end{thm}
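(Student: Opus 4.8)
The plan is to adapt the classical convergence analysis of the (projected) subgradient method, treating the suboptimality gap $\gamma(i^{*};x^{(i)})$ introduced by the exponential mechanism in Algorithm~\ref{alg:dp_subgrad} as an additive perturbation: Lemma~\ref{lem:subgrad_violation} tells us precisely how much this perturbation corrupts the defining subgradient inequality, and~(\ref{eq:bound_exp_gamma}) controls its expected size. Fix an optimal solution $x^{\star}\in\mathcal{P}$ with $f(x^{\star})=f_{\mathrm{opt}}$, write $\gamma_i:=\gamma(i^{*};x^{(i)})$ for the (random) gap incurred at iteration~$i$, and use the Lyapunov-type quantity $\norm{x^{(i)}-x^{\star}}_2^2$.

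First I would establish the one-step recursion. Expanding the square in the update $x^{(i+1)}=x^{(i)}-\alpha_i g^{(i)}$ (and using that projection onto~$\mathcal{P}$ is non-expansive toward $x^{\star}$, if the update includes a projection) gives
\[
\norm{x^{(i+1)}-x^{\star}}_2^2\leq\norm{x^{(i)}-x^{\star}}_2^2-2\alpha_i (g^{(i)})^{T}(x^{(i)}-x^{\star})+\alpha_i^2\norm{g^{(i)}}_2^2 .
\]
Since $g^{(i)}=a_{i^{*}}$ for some index, $\norm{g^{(i)}}_2\leq G$; and Lemma~\ref{lem:subgrad_violation} applied at $x_0=x^{(i)}$ and $x=x^{\star}$ gives $(g^{(i)})^{T}(x^{(i)}-x^{\star})\geq f(x^{(i)})-f_{\mathrm{opt}}-\gamma_i$. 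Substituting, summing over $i=1,\dots,k$, telescoping, discarding the nonnegative terminal term, and using $\norm{x^{(1)}-x^{\star}}_2^2\leq R^2$ yields
\[
2\sum_{i=1}^{k}\alpha_i\bigl(f(x^{(i)})-f_{\mathrm{opt}}\bigr)\leq R^2+G^2\sum_{i=1}^{k}\alpha_i^2+2\sum_{i=1}^{k}\alpha_i\gamma_i .
\]
Lower-bounding the left-hand side by $2\bigl(\sum_{i}\alpha_i\bigr)\bigl(\min_{i}f(x^{(i)})-f_{\mathrm{opt}}\bigr)$ and dividing by $2\sum_{i}\alpha_i$ produces a bound on $\min_i f(x^{(i)})-f_{\mathrm{opt}}$ that holds for every realization of the randomness.

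Finally I would take expectations. Each iteration of Algorithm~\ref{alg:dp_subgrad-1} runs Algorithm~\ref{alg:dp_subgrad} with privacy parameter $\epsilon/k$, so~(\ref{eq:bound_exp_gamma}) with $\epsilon$ replaced by $\epsilon/k$ gives $\mathbb{E}[\gamma_i\mid x^{(i)}]\leq 2b_{\max}(1+\log m)/(\epsilon/k)=\bar{\gamma}(\epsilon/k)$; since the step sizes $\alpha_i$ are chosen in advance (nonrandom), the tower property gives $\mathbb{E}[\alpha_i\gamma_i]\leq\alpha_i\bar{\gamma}(\epsilon/k)$, whence $\mathbb{E}\bigl[\sum_i\alpha_i\gamma_i\bigr]\big/\sum_i\alpha_i\leq\bar{\gamma}(\epsilon/k)$. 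Taking the expectation of the pathwise bound then yields~(\ref{eq:dp_subgrad_subopt}). The step that requires the most care is exactly this conditioning argument: the gaps $\gamma_i$ are functions of the random iterate $x^{(i)}$, so~(\ref{eq:bound_exp_gamma}) cannot be invoked directly and must be applied conditionally on the history up through iteration~$i$ before averaging. A secondary technicality is ensuring the iterates stay relevant to~$\mathcal{P}$; the cleanest fix is to project each update onto~$\mathcal{P}$, which preserves privacy by post-processing and is non-expansive toward $x^{\star}\in\mathcal{P}$, so the recursion above is unaffected.
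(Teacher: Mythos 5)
Your proposal is correct and follows essentially the same argument as the paper: the standard subgradient-method recursion on $\norm{x^{(i)}-x_{\mathrm{opt}}}_2^2$, with Lemma~\ref{lem:subgrad_violation} supplying the perturbed subgradient inequality and the bound~(\ref{eq:bound_exp_gamma}) (applied with privacy parameter $\epsilon/k$, conditionally on the iterate) controlling the expected gap. The only cosmetic difference is that you telescope pathwise and take expectations at the end, whereas the paper takes conditional expectations at each step and then invokes concavity of the minimum; both orderings yield~(\ref{eq:dp_subgrad_subopt}).
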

\begin{proof}
See Appendix~\ref{sub:proof_dp_subgrad_subopt}.
\end{proof}
Theorem~\ref{thm:dp_subgrad_subopt} shows a tradeoff between privacy
and suboptimality. The first term, which also appears in the convergence
analysis for the regular subgradient method, implies that the optimal
gap vanishes as the number of iterations~$k\to\infty$. However,
if~$k$ becomes too large, inaccuracy in the private subgradients
will start to act as a dominant factor in suboptimality as the second
term indicates. In particular, Theorem~\ref{thm:dp_subgrad_subopt}
implies that there exists an optimal number of iterations: as the
number of iterations grows, the first term in~(\ref{eq:dp_subgrad_subopt})
decreases, whereas $\bar{\gamma}(\epsilon/k)$ increases due to increased
level of privacy for each iteration. Similar to previous results given
by Theorem~\ref{thm:solution_subopt} and~\ref{thm:exp_pwa_subopt},
the second term (due to privacy) grows as~$\epsilon$ decreases.

\section{Numerical experiments\label{sec:Numerical-experiments}}

\subsection{Implementation details}

In all simulations, the problem data~$\{(a_{i},b_{i})\}_{i=1}^{m}$
are generated from i.i.d. Gaussian distributions. The constraint set
is chosen to be a $d$-dimensional hypercube centered at the origin:
$\mathcal{P}=\{x\colon-c\preceq x\preceq c\},$ whose diameter $\mathrm{diam}(\mathcal{P})=2\sqrt{d}c$.
The level of privacy~$\epsilon$ is set at~$0.1$. The expected
objective value for different privacy-preserving mechanisms is approximated
by the sample average from~$1000$ runs.

\paragraph{Implementation of the vector Laplace noise}

One way to efficiently generate~$w$ from the distribution proportional
to~$\exp(-\lambda\norm{w}_{2})$ is to draw its magnitude~$\bar{w}$
and direction~$\hat{e}$ (as a unit vector) separately. It can be
shown that~$\bar{w}$ follows the Gamma distribution $\Gamma(d,\lambda)$
and the distribution of $\hat{e}$ is isotropic~\cite{chaudhuri2008privacy}.
In order to draw a sample from $\Gamma(d,\lambda)$, one can draw
$d$ i.i.d. samples $w_{1},w_{2},\dots,w_{d}$ from the exponential
distribution: $w_{i}\sim\lambda\exp(-\lambda w_{i})$ $(w_{i}\geq0)$
and obtain~$\bar{w}=\sum_{i=1}^{d}w_{i}$. The direction~$\hat{e}$
can be generated by drawing from the $d$-dimensional standard Gaussian
distribution followed by normalization.

\paragraph{Implementation of the exponential mechanism}

The exponential mechanism requires drawing samples from a distribution
proportional to a non-negative function. Such sampling is usually
performed using Markov chain Monte Carlo (MCMC) methods~\cite{press2007numerical},
which draw samples by simulating a Markov chain whose stationary distribution
is the target distribution. In this paper, we use the Metropolis algorithm
with a multivariate Gaussian proposal distribution. Due the shape
of the constraint set, the covariance matrix~$\Sigma$ of the Gaussian
distribution is chosen to be isotropic, and its magnitude is set to
be proportional to the size of the constraint set: $ $$\Sigma=\eta cI_{d\times d}$,
where~$I_{d\times d}$ is the~$d\times d$ identity matrix, and
$\eta=0.1$. Each sample is generated by running~$5000$ MCMC steps,
after which the Markov chain is considered to have reached its stationary
distribution.

\paragraph{Number of iterations for the subgradient method}

Although Theorem~\ref{thm:dp_subgrad_subopt} clearly shows that
an optimal number of iterations exists for a given choice of~$\epsilon$,
the suboptimality bound is often loose for a given problem so that
optimizing the bound does not provide direct guidance for choosing
the number of iterations. In practice, we observe that the objective
value is quite robust to the number of iterations, as shown in Fig.~\ref{fig:dp_subgrad_typical}.
The plot also includes the objective values obtained from the regular
subgradient method, which decrease slightly as the number of iteration
grows. Due to this robustness, in all subsequent simulations, the
number of iterations is fixed at~$100$. 

\begin{figure}
\begin{centering}
\includegraphics[width=3in]{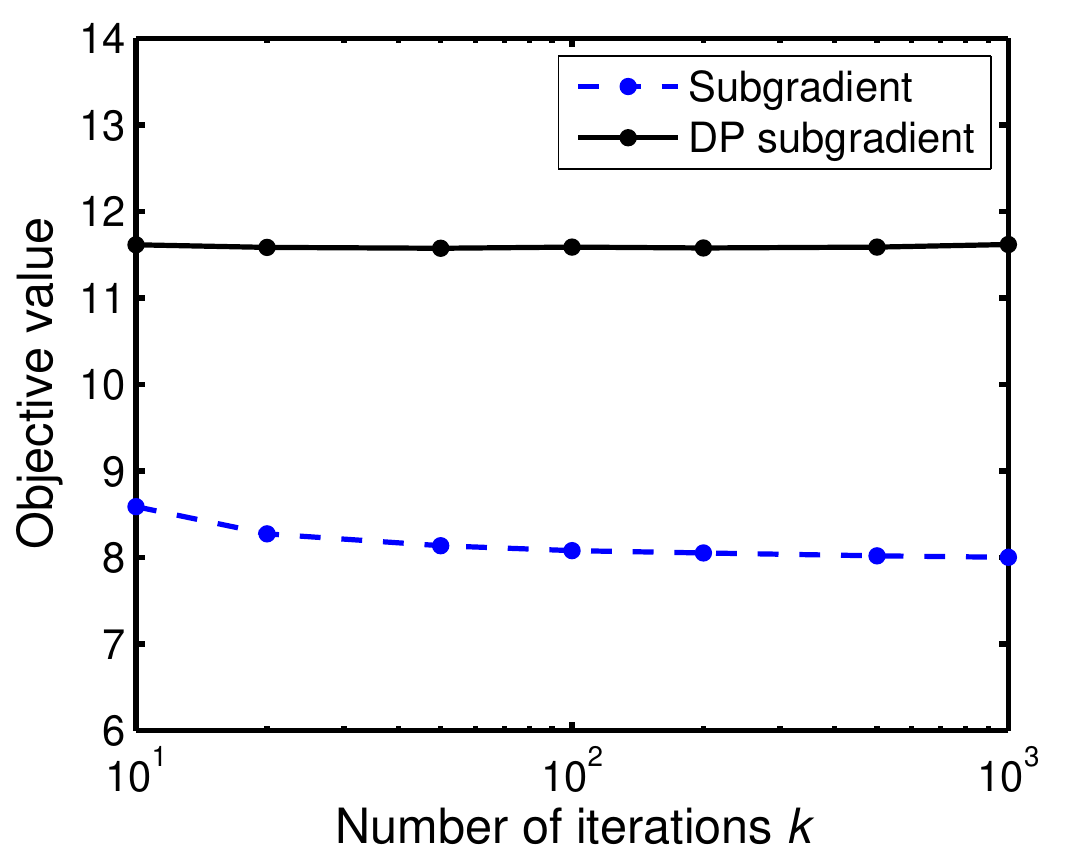}
\par\end{centering}

\caption{Objective values as a function of the number of iterations. Blue:
Objective values obtained from the regular subgradient method. Black:
Objective values obtained from the differentially private subgradient
method.}

\label{fig:dp_subgrad_typical}
\end{figure}

\subsection{Results and discussions}

The simulations investigate the effects of changing~$c$ (the size
of the constraint set~$\mathcal{P}$) and~$m$ (the number of member
affine functions) on all the mechanisms presented in Section~\ref{sec:Privacy-preserving-mechanisms}.
Fig.~\ref{fig:dp_comparison_size} shows the expected objective value
as a function of~$c$ as well as the true optimal value obtained
by solving the original problem. For all privacy preserving mechanisms,
the expected optimal value eventually grows as~$c$ increases, except
that it shows some initial decrease for the exponential mechanism
and the differentially private subgradient method. This non-monotonic
behavior can be explained by noticing two factors that contribute
to the objective value. One factor is the effect of~$c$ on the (original)
optimization problem itself. As~$c$ increases, it leads to a more
relaxed optimization problem and consequently decreases the true optimal
value (magenta dashed line). Another factor of~$c$ is on the amount
of perturbation introduced by the mechanisms. For example, for the
mechanism that perturbs the solution directly, the magnitude of the
vector Laplace noise grows with~$c$. For the exponential mechanism,
the distribution from which the solution is drawn will become less
concentrated around the optimal solution as~$c$ grows. We are unable
to provide a definitive explanation for the other two mechanisms,
but it is expected that changes in~$c$ have a similar effect. 

\begin{figure}
\begin{centering}
\includegraphics[width=3in]{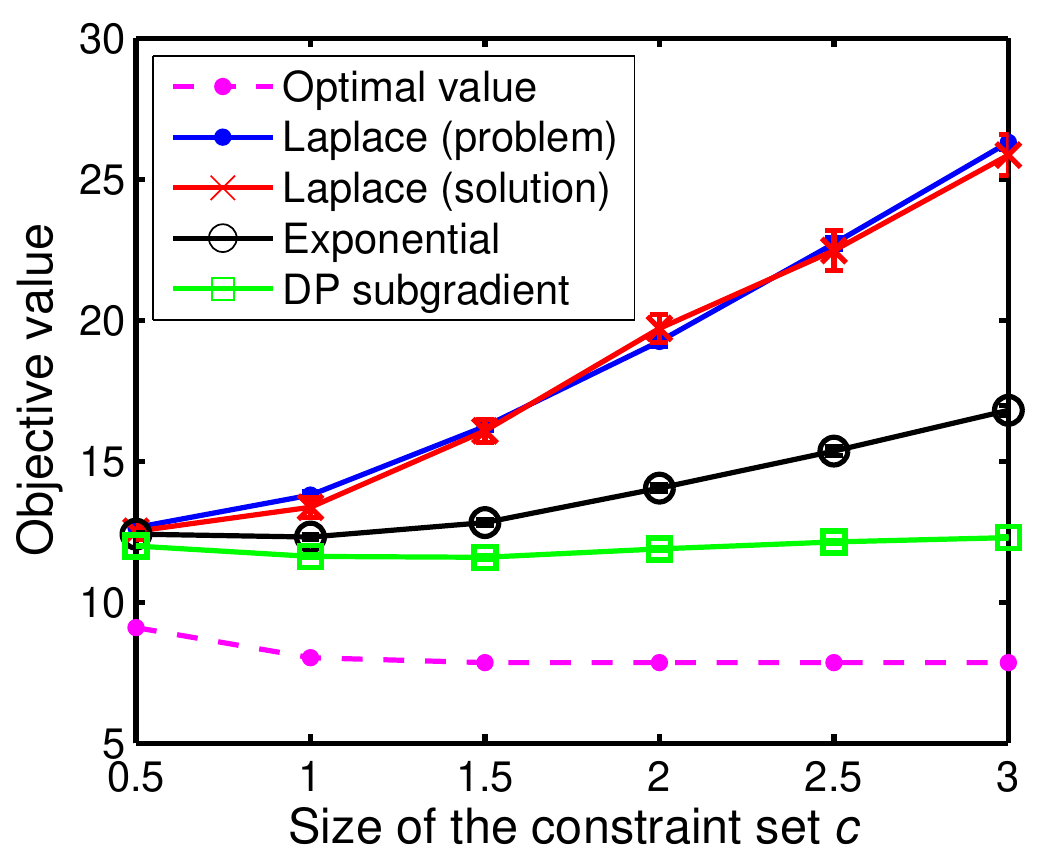}
\par\end{centering}

\caption{Objective values (with error bars corresponding to~$2\sigma$ error)
as a function of~$c$ (the size of the constraint set~$\mathcal{P}$)
for different mechanisms. The true optimal value is shown for comparison.}
\label{fig:dp_comparison_size}
\end{figure}

The effect of~$m$ on the objective value is illustrated in Fig.~\ref{fig:dp_comparison_num}.
As~$m$ increases, more affine functions will be added (i.e., the
affine functions used for a smaller~$m$ is a subset of those used
for a larger~$m$). Unlike changing~$c$, increasing~$m$ causes
the objective value to monotonically increase. First of all, adding
more affine functions causes the optimal value (magenta dashed line)
to increase even in the absence of privacy constraints. In addition,
at least for the case when the problem is perturbed, the magnitude
of the vector Laplace noise grows with~$m$. For the differentially
subgradient method, increase in~$m$ also causes the suboptimality
gap~$\bar{\gamma}$ (that has~$\log m$ dependence) to increase.

\begin{figure}
\begin{centering}
\includegraphics[width=3in]{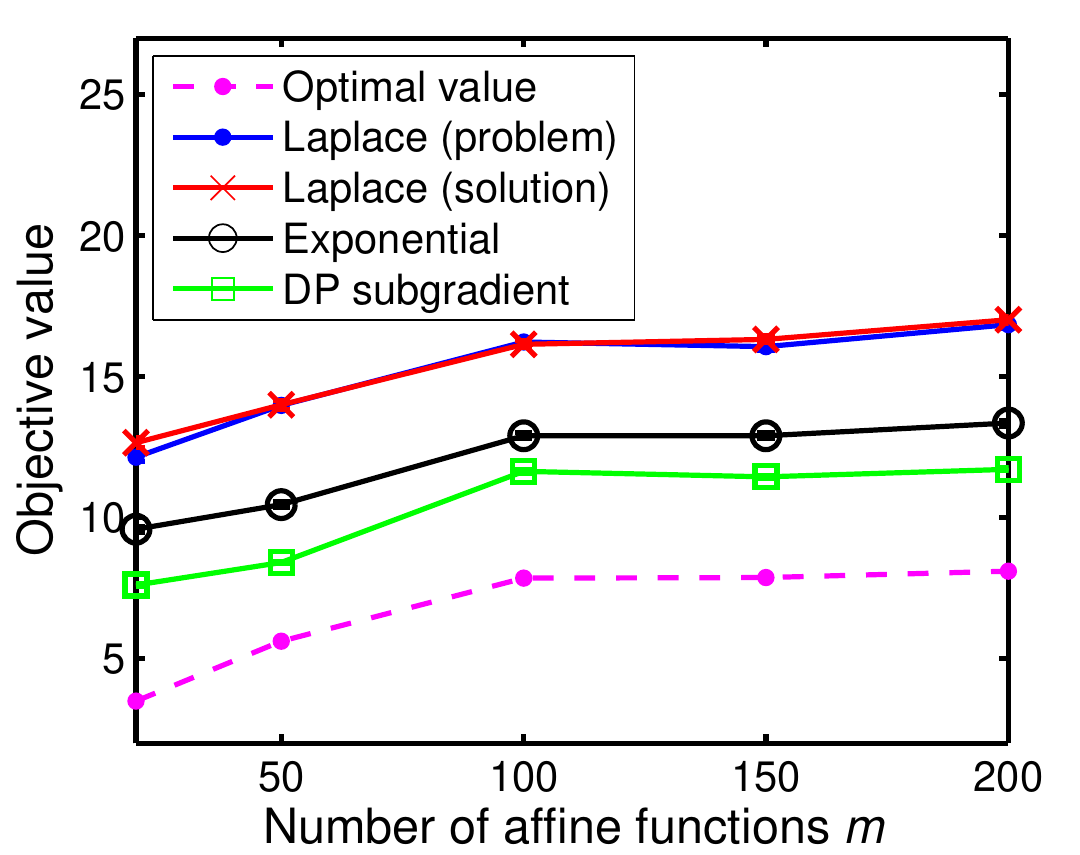}
\par\end{centering}

\caption{Objective values (with error bars corresponding to~$2\sigma$ error)
as a function of~$m$ (the number of member affine functions) for
different mechanisms. The true optimal value is shown for comparison.}
\label{fig:dp_comparison_num}
\end{figure}

As an interesting observation from all simulations, the differentially
subgradient method is superior to other mechanisms. It achieves the
lowest expected suboptimality among all mechanisms. Also, when~$c$
increases, it has the slowest growth rate of suboptimality. The reason
that why subgradient method works best is not evident from the suboptimality
analysis presented in Section~\ref{sec:Privacy-preserving-mechanisms}.
It is known that the subgradient method is quite robust to \emph{unbiased}
noise in subgradients (often known as the stochastic subgradient method).
However, in this work, the noise introduced by the exponential mechanism
in Algorithm~\ref{alg:dp_subgrad} is biased so that the analysis
on stochastic subgradient method does not directly apply. This remains
a interesting question for future investigations.

\section{Conclusions}

In this paper, we study the problem of preserving differential privacy
for the solution of convex optimization problems with a piecewise
affine objective. Several privacy-preserving mechanisms are presented,
including the Laplace mechanism\emph{ }applied on either the problem
data or the problem solution\emph{, }the\emph{ }exponential mechanism,
and the differentially private subgradient method. Theoretical analysis
on the suboptimality of these mechanisms shows the trade-offs between
optimality and privacy: more privacy can be provided at the expense
of sacrificing optimality. Empirical numerical experiments show that
the differentially private subgradient method has the least adverse
effect on optimality for a given level of privacy. In addition, it
is likely that the scheme of providing privacy by iteratively solving
an optimization problem privately (as used by the private subgradient
method) can be applied to more general convex optimization problems
beyond the specific form studied in this paper. This appears to be
an interesting direction for future investigations.

\textbf{Acknowledgments}. The authors would like to thank Aaron Roth
for providing early access to the draft on differentially private
linear programming~\cite{hsu2014privately} and helpful discussions
on differential privacy. This work was supported in part by the NSF
(CNS-1239224) and TerraSwarm, one of six centers of STARnet, a Semiconductor
Research Corporation program sponsored by MARCO and DARPA.

\bibliographystyle{abbrv}
\bibliography{ref}

\begin{thebibliography}{10}

\bibitem{ca_smart_meters_url}
The {B}enefits of {S}mart {M}eters.
\newblock \url{http://www.cpuc.ca.gov/PUC/energy/Demand+Response/benefits.htm}
  (retrieved: March 19, 2014).

\bibitem{boyd2004convex}
S.~P. Boyd and L.~Vandenberghe.
\newblock {\em Convex Optimization}.
\newblock Cambridge University Press, 2004.

\bibitem{canepa2013framework}
E.~S. Canepa and C.~G. Claudel.
\newblock A framework for privacy and security analysis of probe-based traffic
  information systems.
\newblock In {\em {ACM} International Conference on High Confidence Networked
  Systems}, pages 25--32, 2013.

\bibitem{chaudhuri2008privacy}
K.~Chaudhuri and C.~Monteleoni.
\newblock Privacy-preserving logistic regression.
\newblock In {\em NIPS}, volume~8, pages 289--296, 2008.

\bibitem{chaudhuri2011differentially}
K.~Chaudhuri, C.~Monteleoni, and A.~D. Sarwate.
\newblock Differentially private empirical risk minimization.
\newblock {\em The Journal of Machine Learning Research}, 12:1069--1109, 2011.

\bibitem{dwork2008differential}
C.~Dwork.
\newblock Differential privacy: A survey of results.
\newblock In {\em Theory and Applications of Models of Computation}, pages
  1--19. Springer, 2008.

\bibitem{dwork2006calibrating}
C.~Dwork, F.~McSherry, K.~Nissim, and A.~Smith.
\newblock Calibrating noise to sensitivity in private data analysis.
\newblock In {\em Theory of Cryptography}, pages 265--284. Springer, 2006.

\bibitem{geng2012optimal}
Q.~Geng and P.~Viswanath.
\newblock The optimal mechanism in differential privacy.
\newblock {\em arXiv preprint arXiv:1212.1186}, 2012.

\bibitem{gupta2010differentially}
A.~Gupta, K.~Ligett, F.~McSherry, A.~Roth, and K.~Talwar.
\newblock Differentially private combinatorial optimization.
\newblock In {\em ACM-SIAM Symposium on Discrete Algorithms}, pages 1106--1125,
  2010.

\bibitem{hoenkamp2011neglected}
R.~Hoenkamp, G.~B. Huitema, and A.~J. de~Moor-van Vugt.
\newblock Neglected consumer: The case of the smart meter rollout in the
  netherlands, the.
\newblock {\em Renewable Energy L. \& Pol'y Rev.}, page 269, 2011.

\bibitem{hsu2014privately}
J.~Hsu, A.~Roth, T.~Roughgarden, and J.~Ullman.
\newblock Privately solving linear programs.
\newblock {\em arXiv preprint arXiv:1402.3631}, 2014.

\bibitem{huang2014differentially}
Z.~Huang, S.~Mitra, and N.~Vaidya.
\newblock Differentially private distributed optimization.
\newblock {\em arXiv preprint arXiv:1401.2596}, 2014.

\bibitem{kifer2012private}
D.~Kifer, A.~Smith, and A.~Thakurta.
\newblock Private convex empirical risk minimization and high-dimensional
  regression.
\newblock {\em Journal of Machine Learning Research}, 1:41, 2012.

\bibitem{leny2014differentially}
J.~Le~Ny and G.~Pappas.
\newblock Differentially private filtering.
\newblock {\em IEEE Transactions on Automatic Control}, 59(2):341--354, 2014.

\bibitem{mcsherry2007mechanism}
F.~McSherry and K.~Talwar.
\newblock Mechanism design via differential privacy.
\newblock In {\em IEEE Symposium on Foundations of Computer Science}, pages
  94--103, 2007.

\bibitem{mcsherry2009privacy}
F.~D. McSherry.
\newblock Privacy integrated queries: an extensible platform for
  privacy-preserving data analysis.
\newblock In {\em ACM SIGMOD International Conference on Management of Data},
  pages 19--30, 2009.

\bibitem{molina2010private}
A.~Molina-Markham, P.~Shenoy, K.~Fu, E.~Cecchet, and D.~Irwin.
\newblock Private memoirs of a smart meter.
\newblock In {\em ACM Workshop on Embedded Sensing Systems for
  Energy-Efficiency in Building}, 2010.

\bibitem{narayanan2007break}
A.~Narayanan and V.~Shmatikov.
\newblock How to break anonymity of the netflix prize data set.
\newblock {\em The University of Texas at Austin}, 2007.

\bibitem{press2007numerical}
W.~H. Press, S.~A. Teukolsky, W.~T. Vetterling, and B.~P. Flannery.
\newblock {\em Numerical Recipes: The Art of Scientific Computing}.
\newblock Cambridge University Press, 3rd edition, 2007.

\bibitem{sankar2013smart}
L.~Sankar, S.~Rajagopalan, S.~Mohajer, and H.~Poor.
\newblock Smart meter privacy: A theoretical framework.
\newblock {\em IEEE Transactions on Smart Grid}, 4(2):837--846, 2013.

\bibitem{shor1998nondifferentiable}
N.~Z. Shor.
\newblock {\em Nondifferentiable optimization and polynomial problems},
  volume~24.
\newblock Springer, 1998.

\bibitem{song2013stochastic}
S.~Song, K.~Chaudhuri, and A.~D. Sarwate.
\newblock Stochastic gradient descent with differentially private updates.
\newblock In {\em IEEE Global Conference on Signal and Information Processing},
  2013.

\bibitem{venkitasubramaniam2013privacy}
P.~Venkitasubramaniam.
\newblock Privacy in stochastic control: A markov decision process perspective.
\newblock In {\em Annual Allerton Conference on Communication, Control, and
  Computing}, pages 381--388, 2013.

\bibitem{wasserman2010statistical}
L.~Wasserman and S.~Zhou.
\newblock A statistical framework for differential privacy.
\newblock {\em Journal of the American Statistical Association},
  105(489):375--389, 2010.

\end{thebibliography}

\appendix

\section{Proofs}

\subsection{\label{sub:proof_sensitivity_exp_mech}Proof of Lemma~\ref{lem:sensitivity_exp_mech}}
\begin{proof}
Fix~$x$, $D$, and $D'$, and consider the quantity
\begin{align*}
\bar{\Delta}_{u} & =u(x,D')-u(x,D)\\
 & =\max_{i=1,2,\dots,m}\{a_{i}^{T}x+b_{i}\}-\max_{i=1,2,\dots,m}\{a_{i}^{T}x+b_{i}'\}.
\end{align*}
Define
\[
j=\arg\max_{i}\{a_{i}^{T}x+b_{i}\}\quad\text{and}\quad k=\arg\max_{i}\{a_{i}^{T}x+b_{i}'\}.
\]
Using the fact that $a_{j}^{T}x+b_{j}\geq a_{i}^{T}x+b_{i}$ for all
$i\in\{1,2,\dots,m\}$, we have 
\begin{align}
\bar{\Delta}_{u} & =(a_{j}^{T}x+b_{j})-(a_{k}^{T}x+b_{k}')\label{eq:utility_ineq_bound1}\\
 & \geq(a_{k}^{T}x+b_{k})-(a_{k}^{T}x+b_{k}')=b_{k}-b_{k}'.
\end{align}
Similarly, since $a_{k}^{T}x+b_{k}'\geq a_{i}^{T}x+b_{i}'$ for all
$i\in\{1,2,\dots,m\}$, we have
\begin{equation}
\bar{\Delta}_{u}\leq(a_{j}^{T}x+b_{j})-(a_{j}^{T}x+b_{j}')=b_{j}-b_{j}'.\label{eq:utility_ineq_bound2}
\end{equation}
Combining~(\ref{eq:utility_ineq_bound1}) and~(\ref{eq:utility_ineq_bound2})
together yields
\[
|\bar{\Delta}_{u}|\leq\max\{|b_{k}-b_{k}'|,|b_{j}-b_{j}'|\}.
\]
This is due to the fact that if $\alpha\leq\gamma\leq\beta$ for any
constants~$\alpha$, $\beta$, and $\gamma$, then $|\gamma|\leq\max\{|\alpha|,|\beta|\}$.
Maximizing~$|\bar{\Delta}_{u}|$ over all possible adjacent pairs
of $D$ and $D'$ yields
\begin{equation}
\max_{D,D'\colon\mathrm{Adj}(D,D')}|\bar{\Delta}_{u}|=b_{\max}.\label{eq:utility_ineq_max}
\end{equation}
Since~(\ref{eq:utility_ineq_max}) holds for any~$x$, we have
\[
\Delta_{u}=\max_{x}\max_{D,D'\colon\mathrm{Adj}(D,D')}|u(x,D)-u(x,D')|=\max_{x}\max_{D,D'\colon\mathrm{Adj}(D,D')}|\bar{\Delta}_{u}|=b_{\max}.
\]

\end{proof}

\subsection{\label{sub:proof_dp_subgrad_subopt}Proof of Theorem~\ref{thm:dp_subgrad_subopt}}
\begin{proof}
The proof follows the same procedure as the convergence proof for
the stochastic subgradient descent method (cf.~\cite{shor1998nondifferentiable}),
except for the presence of additional terms that depend on~$\mathbb{E}_{i^{*}}[\gamma(i^{*},x^{(k)})]$
due to violation of the subgradient condition~(\ref{eq:subgrad_cond}). 

At iteration~$k$, we have 
\begin{align*}
\Vert x^{(k+1)}-x_{\mathrm{opt}}\Vert_{2}^{2} & =\Vert x^{(k)}-\alpha_{k}g^{(k)}-x_{\mathrm{opt}}\Vert_{2}^{2}\\
 & =\Vert x^{(k)}-x_{\mathrm{opt}}\Vert_{2}^{2}-2\alpha_{k}g^{(k)T}(x^{(k)}-x_{\mathrm{opt}})+\alpha_{k}^{2}\Vert g^{(k)}\Vert_{2}^{2}.
\end{align*}
Take $\mathbb{E}(\cdot|x^{(k)})$ on both sides to obtain
\begin{align*}
 & \mathbb{E}\left(\left.\Vert x^{(k+1)}-x_{\mathrm{opt}}\Vert_{2}^{2}\right|x^{(k)}\right)\\
 & \qquad=\Vert x^{(k)}-x_{\mathrm{opt}}\Vert_{2}^{2}-2\alpha_{k}\mathbb{E}(g^{(k)T}(x^{(k)}-x_{\mathrm{opt}})|x^{(k)})+\alpha_{k}^{2}\mathbb{E}\left(\left.\Vert g^{(k)}\Vert_{2}^{2}\right|x^{(k)}\right).
\end{align*}
Since~$g^{(k)}$ is computed from Algorithm~\ref{alg:dp_subgrad},
we have
\begin{align*}
\mathbb{E}(g^{(k)T}(x^{(k)}-x_{\mathrm{opt}})|x^{(k)}) & \geq f(x^{(k)})-f(x_{\mathrm{opt}})-\mathbb{E}_{i^{*}}[\gamma(i^{*},x^{(k)})]\\
 & \geq f(x^{(k)})-f_{\mathrm{opt}}-\bar{\gamma}(\epsilon/k),
\end{align*}
where~$i^{*}$ is defined in Algorithm~\ref{alg:dp_subgrad}. This
leads to
\begin{align*}
 & \mathbb{E}\left(\left.\Vert x^{(k+1)}-x_{\mathrm{opt}}\Vert_{2}^{2}\right|x^{(k)}\right)\\
 & \qquad\leq\Vert x^{(k)}-x_{\mathrm{opt}}\Vert_{2}^{2}-2\alpha_{k}(f(x^{(k)})-f_{\mathrm{opt}}-\bar{\gamma}(\epsilon/k))+\alpha_{k}^{2}\mathbb{E}\left(\left.\Vert g^{(k)}\Vert_{2}^{2}\right|x^{(k)}\right).
\end{align*}
Now take expectation with respect to $x^{(k)}$:
\begin{align*}
 & \mathbb{E}\Vert x^{(k+1)}-x_{\mathrm{opt}}\Vert_{2}^{2}\\
 & \qquad\leq\mathbb{E}\Vert x^{(k)}-x_{\mathrm{opt}}\Vert_{2}^{2}-2\alpha_{k}(\mathbb{E}f(x^{(k)})-f_{\mathrm{opt}}-\bar{\gamma}(\epsilon/k))+\alpha_{k}^{2}\mathbb{E}\Vert g^{(k)}\Vert_{2}^{2}.
\end{align*}
Repeat this procedure to obtain
\begin{align*}
 & \mathbb{E}\Vert x^{(k+1)}-x_{\mathrm{opt}}\Vert_{2}^{2}\\
 & \qquad\leq\mathbb{E}\Vert x^{(1)}-x_{\mathrm{opt}}\Vert_{2}^{2}-2{\textstyle \sum_{i=1}^{k}}\alpha_{i}(\mathbb{E}f(x^{(i)})-f_{\mathrm{opt}}-\bar{\gamma}(\epsilon/k))+{\textstyle \sum_{i=1}^{k}}\alpha_{i}^{2}\mathbb{E}\Vert g^{(i)}\Vert_{2}^{2}.
\end{align*}
Rearrange the above and use the concavity of element-wise minimum
to obtain the suboptimality bound~(\ref{eq:dp_subgrad_subopt}).\end{proof}

\end{document}